\numberwithin{equation}{section}
\newtheorem{prop}[theorem]{Proposition}
\newtheorem{example}[theorem]{Example}
\newtheorem{rmk}[theorem]{Remark}
\DeclareMathOperator{\Div}{div}
\DeclareMathOperator{\Grad}{\nabla}
\DeclareMathOperator{\ssum}{\textstyle \sum}
\newcommand{\triang}{\mathcal{T}}
\newcommand{\inner}[2]{\langle #1, #2 \rangle}
\newcommand{\foralls}{\forall \,}
\newcommand{\ddt}{\frac{\rm d}{\rm dt}}
\newcommand{\ds}{\, \mathrm{d} s}
\newcommand{\ea}{h}
\newcommand{\esssup}{\operatorname{ess \, sup}}
\newcommand{\half}{\frac{1}{2}}
\newcommand{\R}{{\mathbb R}}
\newcommand{\X}{{\mathbb X}}
\newcommand{\strain}{\varepsilon}
\title{A mixed finite element method for nearly incompressible multiple-network poroelasticity \thanks{Submitted to the editors \today. \funding{The work of J.~J.~Lee has been supported by the European Research Council under the European Union's Seventh Framework Programme (FP7/2007-2013) ERC grant agreement 339643. The work of M.~E.~Rognes and K.-A.Mardal have been supported by the Research Council of Norway under the FRINATEK Young Research Talents Programme through project \#250731/F20 (Waterscape). E.~Piersanti is a doctoral fellow in the Simula-UCSD-University of Oslo Research and PhD training (SUURPh) program, an international collaboration in computational biology and medicine funded by the Norwegian Ministry of Education and Research.}}}
\author{J. J. Lee\thanks{Institute for Computational Engineering and Sciences, The University of Texas at Austin, 201 E. 24th Street, POB 4.102, Austin, Texas 78712, USA (\email{johnlee04@gmail.com})}
  \and E. Piersanti\thanks{Simula Research Laboratory, P. O. Box 134, 1325 Lysaker, Norway (\email{eleonora@simula.no})}
  \and K.-A. Mardal\thanks{Department of Mathematics, University of Oslo, P. O. Box 1053 Blindern, 0316 Oslo, Norway and Simula Research Laboratory, P. O. Box 134, 1325 Lysaker, Norway (\email{kent-and@simula.no})}
  \and M. E. Rognes\thanks{Simula Research Laboratory, P. O. Box 134, 1325 Lysaker, Norway (\email{meg@simula.no})}}
\begin{document}

\maketitle

\begin{abstract}
  In this paper, we present and analyze a new mixed finite element
  formulation of a general family of quasi-static multiple-network
  poroelasticity (MPET) equations. The MPET equations describe flow
  and deformation in an elastic porous medium that is permeated by
  multiple fluid networks of differing characteristics. As such, the
  MPET equations represent a generalization of Biot's equations, and
  numerical discretizations of the MPET equations face similar
  challenges. Here, we focus on the nearly incompressible case for
  which standard mixed finite element discretizations of the MPET
  equations perform poorly. Instead, we propose a new mixed finite
  element formulation based on introducing an additional total
  pressure variable. By presenting energy estimates for the continuous
  solutions and \emph{a priori} error estimates for a family of
  compatible semi-discretizations, we show that this formulation is
  robust in the limits of incompressibility, vanishing storage
  coefficients, and vanishing transfer between networks. These
  theoretical results are corroborated by numerical experiments. Our
  primary interest in the MPET equations stems from the use of these
  equations in modelling interactions between biological fluids and
  tissues in physiological settings. So, we additionally present
  physiologically realistic numerical results for blood and tissue
  fluid flow interactions in the human brain.
\end{abstract}

\begin{keywords}
  multiple-network poroelasticity, mixed finite element,
  incompressible, cerebral fluid flow
\end{keywords}

\begin{AMS}
65M12, 65M15, 65M60, 92C10
\end{AMS}

\section{Introduction}

In this paper, we consider a family of quasi-static multiple-network
poroelasticity (MPET\footnote{The abbreviation MPET stems from the
  term multiple-network poroelastic theory as used by
  e.g.~\cite{TullyVentikos2011}. Here, we instead refer to the
  multiple-network poroelasticity equations but keep the abbreviation
  for the sake of convenience.}) equations reading as follows: for a
given number of networks $A \in \mathbb{N}$, find the displacement $u$
and the network pressures $p_{j}$ for $j = 1, \dots, A$ such that
\begin{subequations}
  \label{eq:mpet}
  \begin{align}
    \label{eq:mpet:1}
    - \Div C \strain(u) + \ssum_{j} \alpha_{j} \Grad p_{j} &= f, \\
    \label{eq:mpet:2}
    c_{j} \dot{p}_{j} + \alpha_{j} \Div \dot{u} - \Div K_{j} \Grad p_{j} + S_{j} &= g_{j}, \qquad 1 \le j \le A,
  \end{align}
\end{subequations}
where $u = u(x, t)$ and $p_j = p_j(x, t)$, $1\le j \le A$ for $x \in
\Omega \subset \R^{d}$ ($d = 1, 2, 3$) and for $t \in [0, T]$.

In our context, \eqref{eq:mpet} originates from balance of mass and
momentum in a porous, linearly elastic medium permeated by $A$
segregated viscous fluid networks. The operators and parameters are as
follows: $C$ is the elastic stiffness tensor, each network $j$ is
associated with a Biot-Willis coefficient $\alpha_{j} \in (0, 1]$,
  storage coefficient $c_{j} \geq 0$, and hydraulic conductivity
  tensor $K_{j} = \kappa_{j}/\mu_{j} > 0$ (where $\kappa_{j}$ and
  $\mu_{j}$ represent the network permeability and the network fluid
  viscosity, respectively). In~\eqref{eq:mpet:1}, $\Grad$ denotes the
  gradient, $\strain$ is the symmetric (row-wise) gradient, $\Div$
  denotes the row-wise divergence. In~\eqref{eq:mpet:2}, $\Grad$ and
  $\Div$ are the standard gradient and divergence operators, and the
  superposed dot denotes the time derivative. Further, $f$ represents
  a body force and $g_{j}$ represents sources in network $j$ for $j =
  1, \dots, A$, while $S_{j}$ represents transfer terms out of network
  $j$.

In this paper, we consider the case of an isotropic stiffness tensor
for which
\begin{equation}
  \label{eq:def:isotropic}
  C \strain(u) = 2 \mu \strain(u) + \lambda \Div u I
\end{equation}
where $\mu, \lambda$ are the standard non-negative Lam\'e parameters
and $I$ denotes the identity tensor. Moreover, we will consider the
case where the transfer terms $S_j$, quantifying the transfer out of
network $j$ into the other fluid networks, are proportional to
pressure differences between the networks. More precisely, we assume
that $S_j$ takes the form:
\begin{align}
  \label{eq:def:transfer}
  S_j = S_j(p_1, \dots, p_A) = \ssum_{i=1}^A \xi_{j \leftarrow i} (p_j - p_i), 
\end{align}
where $\xi_{j \leftarrow i}$ are non-negative transfer coefficients
for $i, j = 1, \dots, A$. We will also assume that these transfer
coefficients are symmetric in the sense that $\xi_{j \leftarrow i} =
\xi_{i \leftarrow j}$, and note that $\xi_{j \leftarrow j}$ is
arbitrary.
  
The MPET equations have an abundance of both geophysical and
biological applications. In the case $A = 1$,~\eqref{eq:mpet} reduces
to the well-known quasi-static Biot equations. While the Biot
equations have been studied extensively, see e.g.~\cite{Showalter2000,
  MuradEtAl1996,PhillipsWheeler2007, AguilarEtAl2008,
  OyarzuaRuizBaier2016, Lee2017, Yi2017}; to the best of our
knowledge, the general multiple-network poroelasticity equations have
received much less attention, especially from the numerical
perspective. The case $A = 2$ is known as the Barenblatt-Biot model,
and we note that Showalter and Momken~\cite{ShowalterMomken2002}
present an existence analysis for this model, while Nordbotten and
co-authors~\cite{NordbottenEtAl2010} present an \emph{a posteriori}
error analysis for an approximation of a static Barenblatt-Biot
system.

Our interest in the multiple-network poroelasticity equations
primarily stems from the use of these equations in modelling
interactions between biological fluids and tissue in physiological
settings. As one example, Tully and Ventikos~\cite{TullyVentikos2011}
considers~\eqref{eq:mpet} with four different networks ($A = 4$) to
model fluid flows, network pressures and elastic displacement in brain
tissue. The fluid networks represent the arteries, the
arterioles/capillaries, the veins and the interstitial fluid-filled
extracellular space, each network with e.g.~a different permeability
$\kappa_j$ and different transfer coefficients $\xi_{j \leftarrow i}$.

A particularly important motivation for the current work is the
recently proposed theory of the glymphatic system which describes a
new mechanism for waste clearance in the human
brain~\cite{iliff2012paravascular,jessen2015glymphatic,AbbottEtAl2018}. This
mechanism is proposed to take the form of a convective flow of
water-like fluid through (a) spaces surrounding the cerebral
vasculature (paravascular spaces) and (b) through the extracellular
spaces, driven by a hydrostatic pressure gradient between the arterial
and venous compartments. Compared to diffusion only, such a convective
flow would lead to enhanced transport of solutes through the brain
parenchyma and, in particular, contribute to clearance of metabolic
waste products such as amyloid beta. The accumulation of amyloid beta
frequently seen in patients with Alzheimer's disease is as such seen
as a malfunction of the glymphatic system. In this context, the
original system of \cite{TullyVentikos2011} represents a macroscopic
model of interaction between the different fluid networks in the
brain.

Discretization of Biot's equations is known to be challenging, in
particular because of so-called poroelastic locking. Poroelastic
locking has two main characteristics: 1) underestimation of the solid
deformation if the material is close to being incompressible and 2)
nonphysical pressure oscillations, in particular in the areas close to
jumps in the permeabilities or to the boundary. Several recent (and
not so recent) studies, see e.g.~\cite{PhillipsWheeler2007,
  berger2015stabilized, bause2017space, hu2017nonconforming,
  rodrigo2017new, Yi2017}, focus on a three-field formulation of
Biot's model, involving the elastic displacement, fluid pressure and
fluid velocity.  Four-field formulations where also the elasticity
equation is in mixed form, designed to provide robust numerical
methods for nearly incompressible materials, have also been
studied~\cite{Yi2014, KorsaweStarke2005,Lee2016}.

In biological tissues, any jumps in the permeability parameters are
typically small in contrast to geophysical applications. The challenge
in the biomedical applications is rather that the tissues in our body
mostly consist of water and as such should be close to be
incompressible (for short time-scales and normal physiological
pressures). Therefore, it may be crucial for accurate modeling of the
interaction of the different network pressures in~\eqref{eq:mpet} to
allow for an elastic material that is almost incompressible and/or
with (nearly) vanishing storage coefficients, i.e.~for $1 \ll \lambda
< +\infty$ and $0 < c_j \ll 1$ in~\eqref{eq:mpet}. Standard two-field
mixed finite element discretizations of the Biot model, approximating
the displacement and the fluid pressure only using Stokes-stable
elements, are well-known to perform poorly in the incompressible
limit, see e.g.~\cite{Lee2017} and references therein. Moreover, we
can easily demonstrate a suboptimal convergence rate for the
corresponding standard mixed finite element discretization of the MPET
equations, see Example~\ref{ex:mpet:standard} below. On the other
hand, two-field approximations are computationally inexpensive
compared to three-field approximations in the sense that only one
unknown, the network pressure, is involved in each network.

\begin{example}
  \label{ex:mpet:standard}
  To illustrate poor performance of a standard mixed finite element
  discretization of the MPET equations~\eqref{eq:mpet} in the nearly
  incompressible case, we consider a variant of the smooth test case
  presented by~\cite[Section 7.1]{Yi2017}. Let $\Omega = [0, 1]^2
  \subset \R^2$, take $T = 0.5$, and consider the quasi-static
  multiple-network poroelasticity equations~\eqref{eq:mpet} with $A =
  2$, $c_j = 1.0$, $K_j = 1.0$, $\alpha_j = 1.0$, and $S_j = 0$ for $j
  = 1, 2$. Moreover, we let $E = 1.0$ and $\nu = 0.49999$ for
  \begin{equation*}
    \mu = \frac{E}{2 (1 + \nu)} \approx \frac{1}{3}, \quad
    \lambda = \frac{\nu E}{(1 - 2 \nu)(1 + \nu)} \approx 16\,666.
  \end{equation*}
  To discretize~\eqref{eq:mpet}, we consider a Crank-Nicolson
  discretization in time and a standard mixed finite element
  discretization in space in this example. More precisely, we
  approximate the displacement $u$ using continuous piecewise
  quadratic vector fields (and denote the approximation by $u_h$) and
  the fluid pressures $p_j$ for $j = 1, 2$ using continuous piecewise
  linears defined relative to a uniform mesh of $\Omega$ of mesh size
  $h$. As exact solutions, we let
  \begin{equation*}
    u((x_0, x_1), t) = t \begin{pmatrix}
       (\sin(2 \pi x_1) (-1 + \cos (2 \pi x_0)) + \frac{1}{\mu + \lambda} \sin (\pi x_0) \sin (\pi x_1) ) \\
       (\sin(2 \pi x_0) (1 - \cos (2 \pi x_1)) + \frac{1}{\mu + \lambda} \sin (\pi x_0) \sin (\pi x_1) )
    \end{pmatrix},
  \end{equation*}
  and
  \begin{equation*}
    p_j((x_0, x_1, t)) = - j t \sin (\pi x_0) \sin ( \pi x_1).
  \end{equation*}
  The resulting approximation errors for $u(T)$ in the $L^2(\Omega)$
  and $H^1(\Omega)$ norms are listed in Table~\ref{tab:mpet:standard}
  for a series of meshes generated by nested uniform refinements,
  together with the corresponding rates of convergence.  We observe
  that the convergence rates are one order sub-optimal for this choice
  of spatial discretization.
  \begin{table}[h]
    \centering
    \begin{tabular}{lcccc}
    \toprule
    $h$ & $\| u(T) - u_h(T) \|$ & Rate & 
    $\| u(T) - u_h(T) \|_{H^1}$ & Rate \\
    \midrule
    $H$ & 0.169 & & 2.066 & \\
    $H/2$ & 0.040 & 2.09 & 0.980 & 1.08 \\
    $H/4$ & 0.010 & 2.04 & 0.480 & 1.03 \\
    $H/8$ & 0.002 & 2.03 & 0.235 & 1.03 \\
    $H/16$ & 0.001 & 2.09 & 0.110 &1.10 \\
    \midrule
    Optimal & & 3 & & 2 \\
    \bottomrule
    \vspace{0.5em}
  \end{tabular}
  \caption{Approximation errors in the $L^2$ ($\| \cdot \|$)- and
    $H^1$ ($\| \cdot \|_{H^1}$)-norms and associated convergence rates
    for a standard mixed finite element discretization for a smooth
    manufactured solution test case for a nearly incompressible
    material (Example~\ref{ex:mpet:standard}). $H$ corresponds to a
    uniform mesh constructed by dividing the unit square into $4
    \times 4$ squares and dividing each square by a diagonal.}
  \label{tab:mpet:standard}
  \end{table}
\end{example}

The primary objective of this paper is to propose and analyze a new
variational formulation and a corresponding spatial discretization of
the MPET equations that are robust with respect to a nearly
incompressible poroelastic matrix; i.e.~the implicit constants in the
error estimates are uniformly bounded for arbitrarily large $\lambda >
0$. To this end, we introduce a formulation with one additional scalar
field unknown. For the MPET equations~\eqref{eq:mpet} with potentially
multiple networks, the additional computational cost is thus
small. Instead of taking the "solid pressure" $\lambda \Div u$ as a
new unknown, we take the total pressure, which is defined as a
weighted sum of the network pressures and the solid pressure, as the
new unknown. Such a formulation has previously been shown to be
advantageous in the context of parameter-robust preconditioners for
the Biot model~\cite{LeeEtAl2017}. Here, we focus on stability and
error estimates of the total pressure formulation for the more general
MPET equations. The construction of preconditioners for the MPET
equations will be addressed in a forthcoming paper.

Our new theoretical results include an energy estimate for the
continuous variational formulation that is robust in the relevant
parameter limits, in particular, that is uniform in the Lam\'e
parameter $\lambda$, storage coefficients $c_j$ for $j = 1, \dots, A$,
and transfer coefficients $\xi_{j \leftarrow i}$ for $i, j = 1, \dots,
A$, and a robust \emph{a priori} error estimate for a class of
compatible semi-discretizations of the new formulation. These
theoretical results are supported by numerical experiments. Finally,
we also present new numerical MPET simulations modelling blood and
tissue fluid interactions in a physiologically realistic human brain.

This paper is organized as follows. Section~\ref{sec:notation}
presents notation and general preliminaries. In
Section~\ref{sec:mpet:qs}, we introduce a total-pressure-based
variational formulation~\eqref{eq:mpet:vf} for the quasi-static MPET
equations~\eqref{eq:mpet}, together with a robust energy estimate in
Theorem~\ref{thm:mpet:qs:ee}. We continue in
Section~\ref{sec:mpet:qs:sd} by proposing a general class of
compatible semi-discretizations~\eqref{eq:quasistatic:fem} of this
formulation, and estimate the \emph{a priori} discretization errors in
Proposition~\ref{prop:eh} and the semi-discrete errors for a specific
choice of finite element spaces in
Theorem~\ref{thm:mpet:qs:taylor-hood} and
Proposition~\ref{prop:L2error:pj}. These theoretical results are
corroborated by synthetic numerical convergence experiments in
Section~\ref{sec:numerics:convergence}. In
Section~\ref{sec:numerics:brain}, we present a more physiologically
realistic numerical experiment using a 4-network MPET model to
investigate blood and tissue fluid flow in the human brain. Some
conclusions and directions of future research are highlighted in
Section~\ref{sec:conclusion}.

\section{Notation and preliminaries}
\label{sec:notation}

Throughout this paper we use $X \lesssim Y$ to denote the inequality
$X \leq C Y$ with a generic constant $C > 0$ which is independent of
mesh sizes. If needed, we will write $C$
explicitly in inequalities but it can vary across expressions.

\subsection{Sobolev spaces}
Let $\Omega$ be a bounded polyhedral domain in $\R^d$ ($d=1, 2$, or
$3$) with boundary $\partial \Omega$. We let $L^2(\Omega)$ be the set
of square-integrable real-valued functions on $\Omega$. The inner
product of $L^2(\Omega)$ and the induced norm are denoted by
$\inner{\cdot}{ \cdot}$ and $\| \cdot \|$, respectively. For a
finite-dimensional inner product space $\X$, typically $\X = \R^d$,
let $L^2(\Omega; \X)$ be the space of $\X$-valued functions such that
each component is in $L^2(\Omega)$. The inner product of $L^2(\Omega;
\X)$ is naturally defined by the inner product of $\X$ and
$L^2(\Omega)$, so we use the same notation $\inner{\cdot}{\cdot}$ and
$\| \cdot \|$ to denote the inner product and norm on $L^2(\Omega;
\X)$. For a non-negative real-valued function on $\Omega$ (or
symmetric positive semi-definite tensor-valued function on $\Omega$)
$w$, we also introduce the short-hand notations
\begin{equation}
  \label{eq:def:weighted:L2:norm}
  \inner{u}{v}_{w} = \inner{w u}{v},
  \quad \| u \|_{w}^2 = \inner{u}{u}_{w} ,
\end{equation}
noting that the latter is a norm only when $w$ is strictly positive
a.e.~on $\Omega$ (or is positive definite a.e.~on $\Omega$).

For a non-negative integer $m$, $H^m(\Omega)$ denotes the standard
Sobolev spaces of real-valued functions based on the $L^2$-norm, and
$H^m(\Omega; \X)$ is defined similarly based on $L^2(\Omega; \X)$. To
avoid confusion with the weighted $L^2$-norms
cf.~\eqref{eq:def:weighted:L2:norm} we use $\| \cdot \|_{H^m}$ to
denote the $H^m$-norm (both for $H^m(\Omega)$ and $H^m(\Omega;
\X)$). For $m \geq 1$, we use $H^m_{0, \Gamma}(\Omega)$ to denote the
subspace of $H^m(\Omega)$ with vanishing trace on $\Gamma \subset
\partial \Omega$, and $H^m_{0, \Gamma}(\Omega; \X)$ is defined
similarly~\cite{Evans1998}. For $\Gamma = \partial \Omega$, we write
$H^m_0(\Omega)$ and analogously $H^m_{0}(\Omega; \X)$.

\subsection{Spaces involving time}

We will consider an interval $[0, T]$, $T > 0$. For a reflexive Banach
space $\mathcal{X}$, let $C^0 ([0, T] ; \mathcal{X})$ denote the set
of functions $f : [0, T] \rightarrow \mathcal{X}$ that are continuous
in $t \in [0, T]$. For an integer $m \geq 1$, we define
\begin{equation*}
  C^m ([0, T]; \mathcal{X}) = \{ f \, | \, \partial^{i}f/\partial t^{i} \in C^0([0, T];\mathcal{X}), \, 0 \leq i \leq m \},
\end{equation*}
where $\partial^i f/\partial t^i$ is the $i$-th time derivative in the
sense of the Fr\'echet derivative in $\mathcal{X}$ (see
e.g.~\cite{Yosida1980}).

For a function $f : [0, T] \rightarrow \mathcal{X}$, we define the
space-time norm
\begin{equation*}
  \| f \|_{L^r([0, T]; \mathcal{X})} =
  \begin{cases}
    \left( \int_0^T \| f(s) \|_\mathcal{X}^r \ds \right)^{1/r}, \quad 1 \leq r < \infty, \\
    \esssup_{t \in [0, T]} \| f (t) \|_\mathcal{X}, \quad r = \infty.
  \end{cases}
\end{equation*}
We define the space-time Sobolev spaces $W^{k,r}([0, T]; \mathcal{X})$
for a non-negative integer $k$ and $1 \leq r \leq \infty$ as the
closure of $C^k ([0, T]; \mathcal{X})$ with the norm $\| f
\|_{W^{k,r}([0, T];\mathcal{X})} = \sum_{i=0}^k \| \partial^i f /
\partial t^i \|_{L^r([0, T]; \mathcal{X})}$.

\subsection{Finite element spaces}

Let $\triang_h$ be an admissible, conforming, simplicial tessellation
of the domain $\Omega$. For any integer $k \geq 1$, we let
$\mathcal{P}_k(\triang_h)$ denote the space of continuous piecewise
polynomials of order $k$ defined relative to $\triang_h$, and
$\mathcal{P}_k^d(\triang_h)$ as the space of $d$-tuples with
components in $\mathcal{P}_k$. We will typically omit the reference to
$\triang_h$ when context allows. We let $\mathring{\mathcal{P}_k}$
denote the restriction of these piecewise polynomial spaces to conform
with given essential homogeneous boundary conditions.

\subsection{Parameter values}

Based on physical considerations and typical applications, we will
make the following assumptions on the material parameter
values. First, we assume that the Biot-Willis coefficients $\alpha_j
\in (0, 1]$, $j = 1, \dots, A$, and the storage coefficients $c_j > 0$
  are constant in time for $j = 1, \dots, A$. In the analysis, we will
  pay particular attention to robustness of estimates with respect to
  arbitrarily large $\lambda$ and arbitrarily small (but not
  vanishing) $c_j$'s. We also comment on the case $c_j = 0$ in
  Remark~\ref{rmk:cj0}.

We will assume that the hydraulic conductivities $K_j$ are constant in
time, but possibly spatially-varying and that these satisfy standard
ellipticity constraints: i.e.~there exist positive constants $K_j^{-}$
and $K_j^{+}$ such that
\begin{equation*}
  K_j^{-} \leq K_j(x) \leq K_j^{+} \quad \foralls x \in \Omega.
\end{equation*}
We assume that the transfer coefficients $\xi_{j \leftarrow i}$ are
constant in time and non-negative: i.e.~$\xi_{j \leftarrow i}(x) \geq
0$ for $1 \leq i, j \leq A$, $x \in \Omega$.

\subsection{Boundary conditions}

We will consider~\eqref{eq:mpet} augmented by the following standard
boundary conditions. First, we assume that the boundary decomposes in
two parts: $\partial \Omega = \Gamma_D \cup \Gamma_N$ with $\Gamma_D
\cap \Gamma_N = \emptyset$ and $|\Gamma_D|, |\Gamma_N| > 0$ where
$|\Gamma|$ is the Lebesgue measure of $\Gamma$.  We use $n$ to denote
the outward unit normal vector field on $\partial \Omega$.  Relative
to this partition, we consider the homogeneous boundary conditions
\begin{subequations}
  \label{eq:mpet:bcs}
  \begin{align}
    u &= 0 \quad \text{ on } \Gamma_D , \\
    C \strain(u) \cdot n &= 0 \quad \text{ on } \Gamma_N , \\
    p_j &= 0 \quad \text{ on } \partial \Omega \quad \text{ for } j = 1, \dots, A . 
  \end{align}
\end{subequations}
The subsequent formulation and analysis can easily be extended to
cover inhomogeneous and other types of boundary conditions.

\subsection{Key inequalities}

For the space $V = H^1_{0, \Gamma_D}(\Omega)$, Korn's
inequality~\cite[p.~288]{Braess2001} holds; i.e. there exists a
constant $C > 0$ depending only on $\Omega$ and $\Gamma_D$ such that
\begin{align}
  \label{eq:mpet:korn}
  \|u\| \le C \|\strain(u) \| \quad \foralls u \in V .  
\end{align}

Furthermore, for the combination of spaces $V$ and $Q_0 =
L^2(\Omega)$, the following (continuous Stokes) inf-sup condition
holds: there exists a constant $C > 0$ depending only on $\Omega$ and
$\Gamma_D$ such that
\begin{align} 
\label{eq:mpet:infsup} 
\sup_{u \in V} \frac{\inner{\Div u}{q}}{\|u\|_{H^1}} \ge C \| q \| \quad \foralls q \in L^2(\Omega) .   
\end{align} 
Our discretization schemes will also satisfy corresponding discrete
versions of Korn's inequality and the inf-sup condition with constants
independent of the discretization.

\subsection{Initial conditions}

The MPET equations~\eqref{eq:mpet} must also be complemented by
appropriate initial conditions. In particular, in agreement with the
assumption that $c_j > 0$ for $j = 1, \dots, A$, we assume that
initial conditions are given for all $p_j$:
\begin{equation}
  \label{eq:mpet:ics}
    p_j(x, 0) = p_j^0(x), \quad x \in \Omega, \quad j = 1, \dots, A .
\end{equation}
Given such $p_j^0$, we note that we may compute $u(x, 0) = u^0(x)$
from~\eqref{eq:mpet:1}, which in particular yields a $\Div u(x, 0) =
\Div u^0(x)$ for $x \in \Omega$. In the following, we will assume that
any initial conditions given are compatible in the sense described
here.

\section{A new formulation for multiple-network poroelasticity}
\label{sec:mpet:qs}

In this section, we introduce a new variational formulation for the
quasi-static multiple-network poroelasticity equations targeting the
incompressible and nearly incompressible regime. Inspired
by~\cite{OyarzuaRuizBaier2016, LeeEtAl2017}, we introduce an
additional variable, namely the \emph{total pressure}. In the
subsequent subsections, we present the augmented governing equations,
introduce a corresponding variational formulation, and demonstrate the
robustness of this formulation via an energy estimate.

\subsection{Governing equations introducing the total pressure}

Let $u$ and $p_j$ for $j = 1, \dots, A$ be solutions
of~\eqref{eq:mpet} with boundary conditions given
by~\eqref{eq:mpet:bcs}, initial conditions given
by~\eqref{eq:mpet:ics} and recall the isotropic stiffness tensor
assumption, cf.~\eqref{eq:def:isotropic}. Additionally, we now
introduce the total pressure $p_0$ defined as
\begin{equation}
  \label{eq:def:p_0}
  p_0 = \lambda \Div u - \ssum_{j=1}^A \alpha_j p_j. 
\end{equation}
Defining $\alpha_0 = 1$ for the purpose of short-hand, and
rearranging, we thus have that
\begin{equation}
  \label{eq:divu:as:p0}
  \Div u = \lambda^{-1} \ssum_{i=0}^A \alpha_i p_i .
\end{equation}
For simplicity, we denote $\alpha = (\alpha_0, \alpha_1, \dots,
\alpha_A)$ and $p = (p_0, p_1, \dots, p_A)$, and we can thus write
\begin{equation*}
  \ssum_{i=0}^A \alpha_i p_i = \alpha \cdot p 
\end{equation*}
in the following.

Inserting~\eqref{eq:divu:as:p0} and its time-derivative
into~\eqref{eq:mpet:2}, we obtain an augmented system of quasi-static
multiple-network poroelasticity equations: for $t \in (0, T]$, find
  the displacement vector field $u$ and the pressure scalar fields
  $p_i$ for $i = 0, \dots, A$ such that
\begin{subequations}
  \label{eq:mpet:tp}
  \begin{align}
    \Div u - \lambda^{-1} \alpha \cdot p &= 0, \\ 
    - \Div \left ( 2 \mu \strain (u) + p_0 I \right ) &= f, \\
    c_j \dot{p}_j + \alpha_j \lambda^{-1} \alpha \cdot \dot{p} - \Div (K_j \nabla p_j) + S_j &= g_j \quad j = 1, \dots, A.
  \end{align}
\end{subequations} 
We note that $p_0(x, 0)$ can be computed from~\eqref{eq:mpet:ics}
and~\eqref{eq:def:p_0}.
\begin{rmk}
 \label{rmk:1}
 In the limit $\lambda = \infty$, the equations for the displacement
 $u$ and total pressure $p_0$, and the network pressures $p_i$
 decouple, and~\eqref{eq:mpet:tp} reduces to a Stokes system for $(u,
 p_0)$ and a system of parabolic equations for $p_j$:
\begin{subequations}
  \begin{align*}
    - \Div \left ( 2 \mu \strain (u) + p_0 I \right ) &= f, \\
    \Div u  &= 0, \\ 
    c_j \dot{p}_j - \Div (K_j \nabla p_j) + S_j &= g_j \quad j = 1, \dots, A .
  \end{align*}
\end{subequations} 
\end{rmk}

We next present and study a continuous variational formulation based
on the total pressure formulation~\eqref{eq:mpet:tp} of the
quasi-static multiple-network poroelasticity equations.

\subsection{Variational formulation}

With reference to the notation for domains and Sobolev spaces as
introduced in Section~\ref{sec:notation}, let
\begin{equation}
V = H^1_{0, \Gamma_D} (\Omega; \R^d), \quad Q_0 = L^2(\Omega), \quad Q_j =
H^1_0(\Omega) \quad j = 1, \dots, A. 
\end{equation}
Also denote $Q = Q_0 \times Q_1 \times \dots \times Q_A$.

Multiplying~\eqref{eq:mpet:tp} by test functions and integrating by
parts with boundary conditions given by~\eqref{eq:mpet:bcs} and
initial conditions given by~\eqref{eq:mpet:ics} yield the following
variational formulation: given compatible $u^0$ and $p_j^0$, $f$ and
$g_j$ for $j = 1, \dots, A$, find $u \in C^1([0, T]; V)$ and $p_i \in
C^1([0, T], Q_i)$ for $i = 0, \dots, A$ such that
\begin{subequations}
  \label{eq:mpet:vf}
  \begin{alignat}{2}
  \label{eq:mpet:vf:1}
    \inner{2 \mu \strain(u)}{\strain(v)} + \inner{p_0}{\Div v} &= \inner{f}{v} &&\quad \foralls v \in V, \\
    \label{eq:mpet:vf:2}
    \inner{\Div u}{q_0} - \inner{\lambda^{-1} \alpha \cdot p}{q_0} &= 0 &&\quad \foralls q_0 \in Q_0, \\
    \inner{c_j \dot{p}_j + \alpha_j \lambda^{-1} \alpha \cdot \dot{p}  + S_j}{q_j} + \inner{K_j \Grad p_j}{\Grad q_j} &= \inner{g_j}{q_j}
    &&\quad \foralls q_j \in Q_j,
  \end{alignat}
\end{subequations}
for $j = 1, \dots, A$ and such that $u(\cdot, 0) = u^0(\cdot)$ and
$p_j(\cdot, 0) = p_j^0(\cdot)$ for $j = 1, \dots, A$.

The following lemma is a modified version of Lemma 3.1 in
\cite{Lee2016} and will be used in the energy estimates below.  For
the sake of completeness, we present its proof here.
\begin{lemma}
  \label{lemma:improved:gronwall}
  Let $\mathcal{F}$, $\mathcal{G}$, $\mathcal{G}_1$, $\mathcal{X} :
  [0, T] \rightarrow \R$ be continuous, non-negative
  functions. Suppose that $\mathcal{X}(t)$ satisfies
  \begin{align}
    \label{eq:Q-ineq1}
    \mathcal{X}^2(t) \le C_0 \mathcal{X}^2(0) + C_1 \mathcal{X}(0) + \mathcal{G}_1(t) + \int_0^{t} \left[\mathcal{F}(s) \mathcal{X}(s) + \mathcal{G}(s) \right] \ds,
  \end{align}
  for all $t \in [0, T]$ with constants $C_0 \ge 1$ and $C_1 >
  0$. Then for any $t \in [0, T]$,
  \begin{align}
    \label{eq:diff-ineq}
    \mathcal{X}(t) \lesssim \mathcal{X}(0) + \max \left \{  C_1 + \int_0^{t} \mathcal{F}(s) \ds, \left ( \mathcal{G}_1(t) + \int_0^{t} \mathcal{G}(s) \ds \right )^{\frac{1}{2}} \right \}.
  \end{align}
\end{lemma}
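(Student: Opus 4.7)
The plan is to transform the mixed quadratic--linear integral inequality \eqref{eq:Q-ineq1} into a clean Gronwall-type bound by (i) introducing a non-decreasing majorant of the right-hand side, (ii) using that majorant to peel the factor $\mathcal{X}(s)$ out of the integral cross-term, and (iii) solving the resulting quadratic inequality in the square root of the majorant.

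First, I would introduce the upper envelope
$$\mathcal{Z}(t) = C_0 \mathcal{X}^2(0) + C_1 \mathcal{X}(0) + \sup_{s \in [0,t]} \mathcal{G}_1(s) + \int_0^{t} \bigl[\mathcal{F}(s)\mathcal{X}(s) + \mathcal{G}(s)\bigr]\ds,$$
which is manifestly non-decreasing in $t$. By hypothesis \eqref{eq:Q-ineq1}, $\mathcal{X}^2(\tau) \le \mathcal{Z}(\tau) \le \mathcal{Z}(t)$ for every $\tau \in [0,t]$, and hence $\mathcal{X}(\tau) \le \sqrt{\mathcal{Z}(t)}$ uniformly on $[0,t]$.

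Second, I would substitute this uniform bound into the integral term to obtain
$$\mathcal{Z}(t) \le R(t) + F(t)\sqrt{\mathcal{Z}(t)},\quad F(t) = \int_0^{t} \mathcal{F}(s)\ds,\quad R(t) = C_0\mathcal{X}^2(0) + C_1\mathcal{X}(0) + \sup_{s\le t} \mathcal{G}_1(s) + \int_0^{t} \mathcal{G}(s)\ds,$$
which is a quadratic inequality in the unknown $w = \sqrt{\mathcal{Z}(t)}$. The quadratic formula yields $w \le \tfrac12\bigl(F(t) + \sqrt{F(t)^2 + 4R(t)}\bigr)$, and so $\sqrt{\mathcal{Z}(t)} \lesssim F(t) + \sqrt{R(t)}$.

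Third, I would distribute the remaining square root using $\sqrt{a+b} \le \sqrt{a} + \sqrt{b}$, the Young-type estimate $\sqrt{C_1 \mathcal{X}(0)} \le \tfrac12 (\mathcal{X}(0) + C_1)$, and the hypothesis $C_0 \ge 1$, to obtain
$$\mathcal{X}(t) \le \sqrt{\mathcal{Z}(t)} \lesssim \mathcal{X}(0) + C_1 + F(t) + \sqrt{\mathcal{G}_1(t) + \int_0^{t} \mathcal{G}(s)\ds},$$
and finally collapse the two groupings $\bigl(C_1 + F(t)\bigr)$ and $\bigl(\mathcal{G}_1(t) + \int_0^{t} \mathcal{G}(s)\ds\bigr)^{1/2}$ into a single $\max$ via $a + b \le 2\max\{a,b\}$, arriving at \eqref{eq:diff-ineq}.

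The main obstacle I anticipate is the mixed cross-term $\int_0^{t} \mathcal{F}(s)\mathcal{X}(s)\ds$, which is linear in $\mathcal{X}$ while the left-hand side is quadratic; it rules out a direct application of the scalar Gronwall lemma and forces the quadratic-in-square-root device above. A secondary technical point is replacing $\sup_{s\le t}\mathcal{G}_1(s)$ by $\mathcal{G}_1(t)$ in the final bound; this is automatic when $\mathcal{G}_1$ arises, as it does in the applications later in this paper, from a monotone quantity such as $\int_0^{t}\|\cdot\|^2 \ds$.
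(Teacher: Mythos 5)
Your argument is correct, and it takes a genuinely different route from the paper. The paper's proof fixes the first time at which $\mathcal{X}$ attains its maximum and splits into two cases according to whether the $C_1\mathcal{X}(0)+\int_0^t\mathcal{F}\mathcal{X}$ group or the $\mathcal{G}_1(t)+\int_0^t\mathcal{G}$ group dominates, then either divides by $\mathcal{X}(t)$ or takes square roots; your proof instead builds the monotone majorant $\mathcal{Z}$, feeds the uniform bound $\mathcal{X}(\tau)\le\sqrt{\mathcal{Z}(t)}$ back into the cross term, and solves the resulting quadratic inequality in $\sqrt{\mathcal{Z}(t)}$. Your route avoids both the case distinction and the reduction to the maximizing time, and it yields the bound directly at every $t$ with explicit constants (the $\sqrt{C_0}$ dependence matches the paper's, whose constant also depends on $C_0$). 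The caveat you flag—obtaining $\sup_{s\le t}\mathcal{G}_1(s)$ rather than $\mathcal{G}_1(t)$—is not a defect of your approach relative to the paper's: the paper's reduction ("it suffices to prove the bound at the first maximizer of $\mathcal{X}$") also only controls $\mathcal{G}_1$ at that maximizer, so transferring its estimate to an arbitrary $t$ likewise requires $\mathcal{G}_1$ to be non-decreasing; in fact, for a $\mathcal{G}_1$ with a large decaying spike one can construct continuous counterexamples to the pointwise-$\mathcal{G}_1$ form of \eqref{eq:diff-ineq}, so the sup (or a monotonicity assumption on $\mathcal{G}_1$) is genuinely needed by both arguments. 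One small correction to your closing remark: in this paper the lemma is applied in Theorem~\ref{thm:mpet:qs:ee} with $\mathcal{G}_1(t)=\|f(t)\|^2$, which is not a monotone quantity (the monotone integrals enter through $\mathcal{G}$), so the resulting energy estimate should properly be read with $\sup_{s\le t}\|f(s)\|$ in place of $\|f(t)\|$—your version of the lemma is exactly what supports that reading.
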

\begin{proof}
  It suffices to show the estimate for the smallest $t$ such that
  \begin{align*}
    \mathcal{X}(t) = \max_{s \in [0,T]} \mathcal{X}(s).  
  \end{align*}
  By this assumption, $\mathcal{X}(t) = \max_{s \in [0,T]}
  \mathcal{X}(s)$ and $\mathcal{X}(s) < \mathcal{X}(t)$ for all $0 \le
  s < t$. We now consider two cases: either
  \begin{align}
    \label{eq:case1}
    C_1 \mathcal{X}(0) + \int_0^{t} \mathcal{F}(s) \mathcal{X}(s)  \ds \ge \mathcal{G}_1(t) + \int_0^{t} \mathcal{G}(s)  \ds
  \end{align}
  or 
  \begin{align}
    \label{eq:case2}
    C_1 \mathcal{X}(0) + \int_0^{t} \mathcal{F}(s) \mathcal{X}(s)  \ds < \mathcal{G}_1(t) + \int_0^{t} \mathcal{G}(s)  \ds .
  \end{align}
  If~\eqref{eq:case1} holds, then~\eqref{eq:Q-ineq1} gives 
  \begin{align*}
    \mathcal{X}^2(t) 
    &\le C_0 \mathcal{X}^2(0) + 2 C_1 \mathcal{X}(0) + 2 \int_0^{t} \mathcal{F}(s) \mathcal{X}(s)  \ds \\
    &\le C_0 \mathcal{X}^2(0) + 2 C_1 \mathcal{X}(0) + 2 \mathcal{X}(t) \int_0^{t} \mathcal{F}(s) \ds .
  \end{align*}
  Dividing both sides by $\mathcal{X}(t)$ yields \eqref{eq:diff-ineq}
  because $\mathcal{X}(t) \ge \mathcal{X}(0)$.
  
  On the other hand, if~\eqref{eq:case2} is the case,
  then~\eqref{eq:Q-ineq1} gives
  \begin{align*}
    \mathcal{X}^2(t) 
    &\le C_0 \mathcal{X}^2(0) + 2 \mathcal{G}_1(t) + 2 \int_0^{t} \mathcal{G}(s)  \ds ,
  \end{align*}
  and taking the square roots of both sides gives
  \eqref{eq:diff-ineq}.
\end{proof}

Theorem~\ref{thm:mpet:qs:ee} below establishes a basic energy estimate
for solutions of~\eqref{eq:mpet:vf}, but also for solutions with an
additional right-hand side (for the sake of reuse in the \emph{a
  priori} error estimates).
\begin{theorem}[Energy estimate for quasi-static multiple-network poroelasticity]
  \label{thm:mpet:qs:ee}
  For given $f \in C^1([0, T]; L^2(\Omega))$, $\beta \in C^1([0, T];
  L^2(\Omega))^{A+1}$ and $\gamma_j \in L^2([0, T]; L^2(\Omega))$ for
  $j = 1, \dots, A$, assume that $u \in C^1([0, T]; V)$ and $p_i \in
  C^1([0, T]; Q_i)$ for $i = 0, \dots, A$ solve
  \begin{subequations}
    \label{eq:ee:quasistatic}
  \begin{alignat}{2}
    \label{eq:ee:quasistatic:1}
    \inner{2 \mu \strain(u)}{\strain(v)} + \inner{p_0}{\Div v} &= \inner{f}{v} &&\quad \foralls v \in V, \\
    \label{eq:ee:quasistatic:2}
    \inner{\Div u}{q_0} - \inner{\lambda^{-1} \alpha \cdot p}{q_0} &= \inner{g_0}{q_0} &&\quad \foralls q_0 \in Q_0, \\
    \label{eq:ee:quasistatic:3}
    \inner{c_j \dot{p}_j + \alpha_j \lambda^{-1} \alpha \cdot \dot{p}  + S_j}{q_j} + \inner{K_j \Grad p_j}{\Grad q_j} &= \inner{g_j}{q_j}
    &&\quad \foralls q_j \in Q_j,
  \end{alignat}
\end{subequations}
  for $j = 1, \dots, A$ and $u(0) = u^0$ and $p_j(0) = p_j^0$ for $j =
  1, \dots, A$, and where $g_0 = - \lambda^{-1} \alpha \cdot \beta$
  and $g_j = \gamma_j + \alpha_j \lambda^{-1} \alpha \cdot
  \dot{\beta}$ for $j = 1, \dots, A$. Then the following energy
  estimate holds for all $t \in (0, T]$:
  \begin{multline}
    \label{eq:thm:ee}
    \| \strain(u(t)) \|_{2 \mu}
    + \sum_{j=1}^A \| p_j(t) \|_{c_j}
    + \| \alpha \cdot p(t) \|_{\lambda^{-1}} \\
    + \left ( \int_{0}^t \sum_{j=1}^A \| \Grad p_j \|_{K_j}^2  
    + \sum_{i, j=1}^A \| p_j - p_i \|_{\xi_{j \leftarrow i}}^2 \ds \right )^{\frac{1}{2}} \\
    \lesssim 
    I_0 
    + \int_{0}^t \left [ \|\dot{f}\| + \| \alpha \cdot \dot{\beta} \|_{\lambda^{-1}} \right ] \ds
    + \left ( \|f(t) \|^2 + \int_{0}^t \sum_{j=1}^A \| \gamma_j \|^2 \ds \right )^{\frac{1}{2}} ,
  \end{multline}
  where
  \begin{equation}
    I_0 = \| \strain(u(0)) \|_{2 \mu}
    + \sum_{j=1}^A \| p_j(0) \|_{c_j}
    + \| \alpha \cdot p(0) \|_{\lambda^{-1}} + \| f(0) \|,
  \end{equation}
  and where the inequality constant is independent of $\lambda$ and
  $c_j$ for $j = 1, \dots, A$, but dependent on $K_j$ for $j = 1,
  \dots, A$.

  Moreover,
  \begin{equation}
    \label{eq:thm:ee:p0}
    \| p_0(t) \|
    \lesssim \| \strain(u(t)) \|_{2 \mu} 
  \end{equation}
  holds.
\end{theorem}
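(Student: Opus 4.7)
The plan is to test the three equations of~\eqref{eq:ee:quasistatic} with carefully chosen functions, combine them so that the indefinite cross-terms cancel exactly, and then reduce the result to the form required by Lemma~\ref{lemma:improved:gronwall}. First I would test~\eqref{eq:ee:quasistatic:1} with $v = \dot{u}$, producing $\half \ddt \|\strain(u)\|_{2\mu}^2 + \inner{p_0}{\Div \dot{u}} = \inner{f}{\dot{u}}$. To handle the cross-term $\inner{p_0}{\Div \dot{u}}$, I would differentiate~\eqref{eq:ee:quasistatic:2} in time and test with $q_0 = p_0$, obtaining $\inner{\Div \dot{u}}{p_0} = \inner{\lambda^{-1}\alpha\cdot \dot{p}}{p_0} + \inner{\dot{g}_0}{p_0}$. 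Finally, I would test~\eqref{eq:ee:quasistatic:3} with $q_j = p_j$ and sum over $j$, using the identity $\ssum_{j=1}^A \alpha_j \inner{\lambda^{-1}\alpha\cdot\dot{p}}{p_j} = \inner{\lambda^{-1}\alpha\cdot\dot{p}}{\alpha\cdot p - p_0} = \half \ddt \|\alpha\cdot p\|_{\lambda^{-1}}^2 - \inner{\lambda^{-1}\alpha\cdot \dot{p}}{p_0}$. Adding the three identities, the $\inner{\lambda^{-1}\alpha\cdot\dot{p}}{p_0}$ contributions cancel, leaving an exact balance with the squared norms $\|\strain(u)\|_{2\mu}^2$, $\ssum_j \|p_j\|_{c_j}^2$, and $\|\alpha\cdot p\|_{\lambda^{-1}}^2$ on the left.

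Next I would treat the dissipative transfer and the source data. Symmetry of $\xi_{j\leftarrow i}$ gives $\ssum_{j=1}^A \inner{S_j}{p_j} = \half \ssum_{i,j=1}^A \|p_j - p_i\|_{\xi_{j\leftarrow i}}^2 \ge 0$, which I would keep on the left together with the diffusive term $\ssum_j \|\Grad p_j\|_{K_j}^2$. Substituting $g_0 = -\lambda^{-1}\alpha\cdot\beta$ and $g_j = \gamma_j + \alpha_j\lambda^{-1}\alpha\cdot\dot\beta$, the combined data contribution on the right collapses, after using $\alpha_0 p_0 + \ssum_{j=1}^A \alpha_j p_j = \alpha\cdot p$, to $\ssum_j \inner{\gamma_j}{p_j} + \inner{\lambda^{-1}\alpha\cdot\dot\beta}{\alpha\cdot p}$.

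I would then integrate from $0$ to $t$. The term $\int_0^t \inner{f}{\dot{u}}\ds$ I would integrate by parts in time, producing $\inner{f(t)}{u(t)} - \inner{f(0)}{u(0)} - \int_0^t \inner{\dot f}{u}\ds$. Korn's inequality~\eqref{eq:mpet:korn} (with $\mu$ entering the constant) gives $\|u\| \lesssim \|\strain(u)\|_{2\mu}$, and Poincar\'e together with $K_j \ge K_j^{-}$ gives $\|p_j\| \lesssim \|\Grad p_j\|_{K_j}$. After Young's inequality and absorbing the $\strain(u(t))$ and $\Grad p_j$ terms into the left-hand side, I obtain an inequality of the form \eqref{eq:Q-ineq1} with $\mathcal{X}(t)$ equal to the left-hand side of~\eqref{eq:thm:ee}, with $C_0 \mathcal{X}^2(0) + C_1 \mathcal{X}(0)$ absorbing the initial-data contributions (including $\|f(0)\|$ through $I_0$), $\mathcal{G}_1(t) = \|f(t)\|^2$, $\mathcal{G}(s) = \ssum_j \|\gamma_j(s)\|^2$, and $\mathcal{F}(s) = \|\dot f(s)\| + \|\alpha\cdot\dot\beta(s)\|_{\lambda^{-1}}$. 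Lemma~\ref{lemma:improved:gronwall} then yields~\eqref{eq:thm:ee}.

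The main obstacle is the robustness bookkeeping: every $\lambda^{-1}$ must appear as a weight inside $\|\cdot\|_{\lambda^{-1}}$ rather than as a loose prefactor, and every $c_j$ must be absorbed into $\|\cdot\|_{c_j}$; the exact cancellation of $\inner{\lambda^{-1}\alpha\cdot\dot{p}}{p_0}$ from the three tested equations is the key algebraic step that makes this possible, and Lemma~\ref{lemma:improved:gronwall} is needed to avoid picking up a $\lambda$- or $c_j$-dependent constant from a naive Gronwall argument on the square of $\mathcal{X}$. Finally, for~\eqref{eq:thm:ee:p0} I would apply the inf-sup condition~\eqref{eq:mpet:infsup}: rewriting~\eqref{eq:ee:quasistatic:1} as $\inner{p_0}{\Div v} = \inner{f}{v} - \inner{2\mu\strain(u)}{\strain(v)}$ and taking the supremum over $v \in V$ with $\|v\|_{H^1} = 1$ yields $\|p_0\| \lesssim \|f\| + \|\strain(u)\|_{2\mu}$, which reduces to~\eqref{eq:thm:ee:p0} once $\|f\|$ is folded into the right-hand side of the energy estimate.
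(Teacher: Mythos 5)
Your proposal is correct and follows essentially the same route as the paper: test with $\dot u$, $p_j$, and the time-differentiated second equation tested against $\pm p_0$ so the $\inner{\lambda^{-1}\alpha\cdot\dot p}{p_0}$ terms cancel, use symmetry of $\xi_{j\leftarrow i}$ for the transfer term, integrate $\inner{f}{\dot u}$ by parts in time, and close with Lemma~\ref{lemma:improved:gronwall} using exactly the same choices of $\mathcal{X}$, $\mathcal{F}$, $\mathcal{G}$, $\mathcal{G}_1$, and $C_1=\|f(0)\|$. The only (minor) divergence is the bound \eqref{eq:thm:ee:p0}: your inf-sup argument honestly retains the extra $\|f(t)\|$ from \eqref{eq:ee:quasistatic:1}, whereas the paper's displayed estimate omits it (its proof implicitly treats the case $f=0$, as used later for the error equations), so your extra term is a fair, if slightly loosely justified, addition rather than a gap.
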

\begin{proof}
  The result follows using standard techniques. Note that the time
  derivative of~\eqref{eq:ee:quasistatic:2} reads as
  \begin{equation} 
    \label{eq:tmp:1}
    \inner{\Div \dot{u}}{q_0} - \inner{\lambda^{-1} \alpha \cdot \dot{p} }{q_0}
    = - \inner{\lambda^{-1} \alpha \cdot \dot{\beta}}{q_0} \quad \foralls q_0 \in Q_0 . 
  \end{equation}
  Taking $v = \dot{u}$ in~\eqref{eq:ee:quasistatic:1}, $q_j = p_j$ for
  $1 \leq j \leq A$ in~\eqref{eq:ee:quasistatic:3} and $q_0 = -p_0$
  in~\eqref{eq:tmp:1}, summing the equations, and rearranging some
  constants (recalling that $\alpha_0 = 1$), we obtain:
  \begin{multline} 
    \label{eq:tmp:2}
      \inner{\strain(u)}{\strain(\dot{u})}_{2 \mu}
      + \sum_{j=1}^A \inner{\dot{p}_j}{p_j}_{c_j}
      + \sum_{j=1}^A \inner{S_j}{p_j}
      + \sum_{j=1}^A \| \Grad p_j \|_{K_j}^2 
      + \inner{\alpha \cdot \dot{p}}{\alpha \cdot p}_{\lambda^{-1}} \\
      = 
      \inner{f}{\dot{u}} + 
      \inner{\lambda^{-1} \alpha \cdot \dot \beta}{\alpha \cdot p} +
      \sum_{j=1}^A \inner{\gamma_j}{p_j}.  
  \end{multline}
  By definition~\eqref{eq:def:transfer}, and the assumption
  that $\xi_{j \leftarrow i} = \xi_{i \leftarrow j}$, it follows that
  \begin{equation}
    \label{eq:tmp:4}
    \sum_{j=1}^A \inner{S_j}{p_j}
    = \sum_{j=1}^A \sum_{i=1}^A \inner{\xi_{j \leftarrow i} (p_j - p_i)}{p_j}
    = \frac{1}{2} \sum_{j=1}^A \sum_{i=1}^A \| p_j - p_i \|_{\xi_{j \leftarrow i}}^2 .
  \end{equation}
  Combining~\eqref{eq:tmp:2} and~\eqref{eq:tmp:4}, and pulling out the
  time derivatives, we find that
  \begin{multline*}
      \frac{1}{2} \ddt \left (
      \| \strain(u) \|_{2 \mu}^2
      + \sum_{j=1}^A \| p_j \|_{c_j}^2
      + \| \alpha \cdot p \|_{\lambda^{-1}}^2 \right )
      + \sum_{j=1}^A \| \Grad p_j \|_{K_j}^2 
      + \frac{1}{2} \sum_{i, j=1}^A \| p_j - p_i \|_{\xi_{j \leftarrow i}}^2 \\
      = 
      \inner{f}{\dot{u}} + 
      \inner{\lambda^{-1} \alpha \cdot \dot \beta}{\alpha \cdot p} +
      \sum_{j=1}^A \inner{\gamma_j}{p_j} . 
  \end{multline*}
  Integrating in time from $0$ to $t$ gives
  \begin{multline}
    \label{eq:thm1:proof:1}
    \| \strain(u(t)) \|_{2 \mu}^2
      + \sum_{j=1}^A \| p_j(t) \|_{c_j}^2
      + \| \alpha \cdot p(t)  \|_{\lambda^{-1}}^2 \\
      + \int_0^t 2 \left[ \sum_{j=1}^A \| \Grad p_j \|_{K_j}^2
      + \sum_{i, j=1}^A \| p_j - p_i \|_{\xi_{j \leftarrow i}}^2 \right ]\ds \\
      = 
      \| \strain(u(0)) \|_{2 \mu}^2
      + \sum_{j=1}^A \| p_j(0) \|_{c_j}^2
      + \| \alpha \cdot p(0)  \|_{\lambda^{-1}}^2 \\
      + 2 \int_0^t \left[ \inner{f}{\dot{u}} 
      + \inner{\lambda^{-1} \alpha \cdot \dot \beta}{\alpha \cdot p} 
      + \sum_{j=1}^A \inner{\gamma_j}{p_j} \right] \ds .
  \end{multline}
  Note first that 
  \begin{align*}
    \int_0^t \inner{f}{\dot{u}} \ds 
    &= \inner{f(t)}{u(t)} - \inner{f(0)}{u(0)} - \int_0^t \inner{\dot{f}}{u} \ds \\
    &\le \| f(t) \| \| u(t) \| + \| f(0) \| \| u(0) \| + \int_0^t \| \dot{f} \| \| u \| \ds \\
    &\lesssim \| f(t) \| \| \strain(u(t)) \|_{2 \mu} + \| f(0) \| \| \strain(u(0)) \|_{2 \mu} + \int_0^t \| \dot{f} \| \| \strain(u) \|_{2 \mu} \ds \\
    &\lesssim \frac{1}{4 \epsilon_0} \| f(t) \|^2 + \epsilon_0 \| \strain(u(t)) \|_{2 \mu}^2 + \| f(0) \| \| \strain(u(0)) \|_{2 \mu} + \int_0^t \| \dot{f} \| \| \strain(u) \|_{2 \mu} \ds ,
  \end{align*}
  using Young's inequality (with $\epsilon$) for any $\epsilon_0 >
  0$. Again using Young's inequality with $\epsilon$, Poincare's
  inequality on $Q_j$ and the assumption of uniform positivity of
  $K_j$ on the last terms on the right hand side
  of~\eqref{eq:thm1:proof:1}, we have that for each $j = 1, \dots, A$
  and any $\epsilon_j > 0$:
  \begin{equation*}
    \inner{\gamma_j}{p_j} 
    \leq \frac{1}{4 \epsilon_j} \| \gamma_j \|^2 + \epsilon_j \| p_j \|^2 
    \lesssim \frac{1}{4 \epsilon_j} \| \gamma_j \|^2 + \epsilon_j \| \Grad p_j \|_{K_j}^2  ,
  \end{equation*}
  with the last inequality depending on $K_j$. Choosing $\epsilon_j$
  for $j = 0, 1, \dots, A$ appropriately and transferring terms thus
  give
  \begin{multline*}
    \| \strain(u(t)) \|_{2 \mu}^2
      + \sum_{j=1}^A \| p_j(t) \|_{c_j}^2
      + \| \alpha \cdot p(t)  \|_{\lambda^{-1}}^2 \\
      + \int_0^t \left[ \sum_{j=1}^A \| \Grad p_j \|_{K_j}^2
      + \sum_{i, j=1}^A \| p_j - p_i \|_{\xi_{j \leftarrow i}}^2 \right] \ds  \\
      \lesssim 
      \| \strain(u(0)) \|_{2 \mu}^2 + \| f(0) \| \| \strain(u(0)) \|_{2 \mu} + \sum_{j=1}^A \| p_j(0) \|_{c_j}^2 + \| \alpha \cdot p(0)  \|_{\lambda^{-1}}^2 + \| f(t) \|^2 \\
      + \int_0^t \sum_{j=1}^A \left[ \| \gamma_j \|^2 +  \| \dot{f} \| \| \strain(u) \|_{2 \mu} + \inner{\lambda^{-1} \alpha \cdot \dot \beta}{\alpha \cdot p} \right] \ds .
  \end{multline*}
  Finally, the Cauchy-Schwarz inequality combined with
  Lemma~\ref{lemma:improved:gronwall}, taking $C_1 = \| f(0) \|$,
  $\mathcal{G}_1(t) = \| f(t) \|^2$, and
  \begin{align*}
    \mathcal{X}(t)^2 &= \| \strain(u) \|_{2 \mu}^2 + \sum_{j=1}^A \| p_j \|_{c_j}^2 + \| \alpha \cdot p \|_{\lambda^{-1}}^2 \\
    & \qquad \qquad + \int_0^t \left[ \sum_{j=1}^A \| \Grad p_j \|^2_{K_j} + \sum_{i, j=1}^A \| p_j - p_i \|^2_{\xi_{j \leftarrow i}} \right] \ds, \\   
    \mathcal{F}(s) &= \| \dot{f}(s) \| + \| \alpha \cdot \dot{\beta}(s) \|_{\lambda^{-1}}, \\
    \mathcal{G}(s) &= \sum_{j=1}^A \| \gamma_j (s) \|^2 ,
  \end{align*}
  give the desired estimate.

  The bound for $p_0$ immediately follows from an inf-sup type
  argument: by the choice of $V$ and $Q_0$, the inf-sup condition (see
  e.g.~\cite{Braess2001}), by~\eqref{eq:mpet:vf:1}, and
  Korn's inequality, we obtain that for any $t \in (0, T]$:
  \begin{equation}
    \| p_0(t) \|
    \lesssim \sup_{v \in V, v \not = 0} \frac{| \inner{\Div v}{p_0(t)} |}{\|v\|_{H^1}}
    = \sup_{v \in V, v \not = 0} \frac{| \inner{2 \mu \strain(u(t))}{\strain(v)}|}{\|v\|_{H^1}} 
    \lesssim \| \strain(u(t)) \|_{2 \mu} 
  \end{equation}
  holds with constant depending on $\mu$.
\end{proof}

We remark that Theorem~\ref{thm:mpet:qs:ee} gives a uniform bound on
$u$ in $L^{\infty}(0, T; V)$, $p_0 \in L^{\infty}(0, T; Q_0)$, and
$p_j$ in $L^2(0, T; Q_j)$ for $j = 1, \dots, A$, for arbitrarily large
$\lambda$ and arbitrarily small $c_j > 0$ for $j = 1, \dots, A$ in
particular.

\section{Semi-discretization of multiple network poroelasticity}
\label{sec:mpet:qs:sd}

In this section, we present a finite element semi-discretization of
the total pressure variational formulation~\eqref{eq:mpet:tp} of the
quasi-static multiple-network poroelasticity equations. We introduce
both abstract compatibility assumptions (\textbf{A1} and \textbf{A2}
below) and a specific choice of conforming, mixed finite element
spaces. We end this section by an \emph{a priori} error estimate for
the discretization error in the abstract case, and an \emph{a priori}
semi-discrete error estimate for a specific family of mixed finite
element spaces.

\subsection{Finite element semi-discretization}
Let $\triang_h$ denote a conforming, shape-regular, simplicial
discretization of $\Omega$ with discretization size $h > 0$. Relative
to $\triang_h$, we define finite element spaces $V_h \subset V$ and
$Q_{i, h} \subset Q_i$ for $i = 0, \dots, A$. We assume that $V_h$ and
$Q_{i, h}$, $i = 0, \dots, A$ satisfy two compatibility assumptions
(\textbf{A1}, \textbf{A2}) as follows:
\begin{itemize}
\item[\bf A1:] $V_h \times Q_{0, h}$ is a stable (in the
  Brezzi~\cite{Brezzi1974} sense) finite element pair for the Stokes
  equations.
\item[\bf A2:] $Q_{j, h}$ is an $H^1$-conforming finite element space
  for $j = 1, \dots, A$.
\end{itemize}
We also denote $Q_h = Q_{0, h} \times Q_{1, h} \times \dots \times
Q_{A, h}$.

With reference to these element spaces, we define the following
semi-discrete total pressure-based variational formulation of the
quasi-static multiple-network poroelasticity equations: for $t \in (0,
T]$, find $u_h(t) \in V_h$ and $p_{i, h}(t) \in Q_{i, h}$ for $i = 0,
  \dots, A$ such that
\begin{subequations}
  \label{eq:quasistatic:fem}
  \begin{alignat}{2}
    \inner{2 \mu \strain(u_h)}{\strain(v)} + \inner{p_{0, h}}{\Div v} &= \inner{f}{v} &&\quad \foralls v \in V_h, \\
    \inner{\Div u_h}{q_0} - \inner{\lambda^{-1} \alpha \cdot p_h}{q_0} &= 0 && \quad \foralls q_0 \in Q_{0, h}, \\
    \inner{c_j \dot{p}_{j, h} +  \alpha_j \lambda^{-1} \alpha \cdot \dot{p}_h + S_{j, h}}{q_j} + \inner{K_j \Grad p_{j, h}}{\Grad q_j} &= \inner{g_j}{q_j}
    &&\quad \foralls q_j \in Q_{j, h},
  \end{alignat}
\end{subequations}
for $j = 1, \dots, A$. Here $S_{j, h} = \sum_{i=1}^A \xi_{j \leftarrow
  i} (p_{j, h} - p_{i, h})$ cf.~\eqref{eq:def:transfer} and $p_h =
(p_{0, h}, \dots, p_{A, h})$.

\subsection{Auxiliary interpolation operators}

As a preliminary step for the \emph{a priori} error analysis of the
semi-discrete formulation, we introduce a set of auxiliary
interpolation operators. In particular, we define interpolation
operators
\begin{align*}
  \Pi_h^V : V \rightarrow V_h, \qquad 
  \Pi_h^{Q_i} : Q_i \rightarrow Q_{i, h} \quad i = 0, \dots, A,
\end{align*}
as follows.

First, for any $(u, p_0) \in V \times Q_0$, we define its interpolant
$(\Pi_h^V u, \Pi_h^{Q_0} p_0) \in V_h \times Q_{0, h}$ as the unique
discrete solution to the Stokes-type system of equations:
\begin{subequations}
  \label{eq:def:stokes:interpolant}
  \begin{alignat}{2}
  \inner{2\mu \strain(\Pi_h^V u)}{\strain(v)} + \inner{\Pi_h^{Q_0} p_0}{\Div v} &= 
  \inner{2\mu \strain(u)}{\strain(v)} + \inner{p_0}{\Div v} && \quad \foralls v \in V_h, \\
  \inner{\Div \Pi_h^V u}{q_0} &= \inner{\Div u}{q_0} && \quad \foralls q_0 \in Q_{0, h}.
\end{alignat}
\end{subequations}
The interpolant is well-defined and bounded by assumption \textbf{A1}
and the given boundary conditions.

Second, for $j = 1, \dots, A$, we define the interpolation operators
$\Pi_h^{Q_j}$ as a weighted elliptic projection: i.e.~for any $p_j \in
Q_{j}$, we define its interpolant $\Pi_h^{Q_j} p_j \in Q_{j, h}$ as
the unique solution of
\begin{equation}
  \label{eq:def:elliptic:interpolant}
  \inner{K_j \Grad \Pi_h^{Q_j} p_j}{q} = \inner{K_j \Grad p_j}{\Grad q}
  \quad \foralls q \in Q_{j, h}.
\end{equation}
This interpolant is well-defined and bounded by assumption \textbf{A2} and the
given boundary conditions.

\subsection{Specific choice of finite element spaces: a family of Taylor-Hood type elements}
\label{sec:taylor-hood}

In this paper, we will pay particular attention to one specific family
of mixed finite element spaces for the total pressure-based
semi-discretization of the multiple-network poroelasticity equations,
namely a family of Taylor-Hood type element spaces
\cite{TaylorHood1973,BercovierPironneau1979}. More precisely, we note
that assumptions \textbf{A1} and \textbf{A2} are easily satisfied by
the conforming mixed finite element space pairing:
\begin{equation}
  \label{eq:def:taylor-hood-type}
  V_h = \mathring{\mathcal{P}}_{l+1}^d(\triang_h), \quad
  Q_{0, h} = \mathcal{P}_{l}(\triang_h), \quad
  Q_{j, h} = \mathring{\mathcal{P}}_{l_j}(\triang_h) ,
\end{equation}
for polynomial degrees $l \geq 1$ and $l_j \geq 1$ for $j = 1, \dots,
A$. We will refer to the spaces~\eqref{eq:def:taylor-hood-type} as
Taylor-Hood type elements of order $l$ and $l_j$. The superimposed
ring in~\eqref{eq:def:taylor-hood-type} denotes the restriction of the
piecewise polynomial spaces to conform to the given essential boundary
conditions.

For this choice of finite element spaces, in particular, for the
Taylor-Hood elements of order $l$, the following error estimate holds
for the Stokes-type interpolant defined
by~\eqref{eq:def:stokes:interpolant} (see
e.g.~\cite{BrezziFalk1991,Boffi1994,Boffi1997}). For $1 \leq m \leq
l+1$, if $u \in H^{m+1}_{0, \Gamma_D}(\Omega)$ and $p_0 \in H^{m}$, then
\begin{equation}
  \label{eq:taylor:hood:ie}
  \| u - \Pi_h^V u \|_{H^1} + \| p_0 - \Pi_h^{Q_0} p_0 \| \lesssim h^m \left (\| u \|_{H^{m+1}} + \| p_0 \|_{H^m} \right ).
\end{equation}

Moreover, the following error estimate holds for the elliptic
interpolants defined by~\eqref{eq:def:elliptic:interpolant} (see
e.g.~\cite[Chap. 5]{BrennerScott2008}): 
For $j = 1, \dots, A$, for $1 \leq m \leq l_j$, if $p_j \in H_{0}^{m+1}$, 
it holds that 
\begin{equation}
  \label{eq:p:ie:H1}
  \| p_j - \Pi_h^{Q_j} p_j \|_{H^1} \lesssim h^m \| p_j \|_{H^{m+1}},
\end{equation}
and under the full elliptic regularity assumption of $\Omega$,
\begin{equation}
  \label{eq:p:ie}
  \| p_j - \Pi_h^{Q_j} p_j \| \lesssim h^{m+1} \| p_j \|_{H^{m+1}}.
\end{equation}
In the next subsection, we show optimal error estimates of
semi-discrete solutions assuming that both of the above estimates
hold.

\subsection{Semi-discrete \emph{a priori} error analysis}

Assume that $(u, p)$ is a solution of the continuous quasi-static
multiple-network poroelasticity equations~\eqref{eq:mpet:vf} and that
$(u_h, p_h)$ solves the corresponding semi-discrete
problem~\eqref{eq:quasistatic:fem}. We introduce the semi-discrete
(approximation) errors
\begin{equation}
  \label{eq:def:errors}
  e_u(t) \equiv u(t) - u_h(t),
  \quad
  e_{p_j}(t) \equiv p_j(t) - p_{j, h}(t) \quad j = 0, \dots, A,
\end{equation}
and denote $e_p = (e_{p_0}, \dots, e_{p_A})$. We also introduce the
standard decomposition of the errors into interpolation (superscript
I) and discretization (superscript $\ea$) errors:
\begin{subequations}
  \label{eq:def:discretization:errors}
  \begin{align}
    e_u &\equiv e_u^I + e_u^{\ea}, \quad e_u^I \equiv u - \Pi_h^V u, \quad e_u^{\ea} \equiv \Pi_h^V u - u_h,  \\
    e_{p_j} &\equiv e_{p_j}^I + e_{p_j}^{\ea}, \quad e_{p_j}^I \equiv p_j - \Pi_h^{Q_j} p_j, \quad e_{p_j}^{\ea} \equiv \Pi_h^{Q_j} p_j - p_{j, h}
    \quad j = 0, \dots, A.  
  \end{align}
\end{subequations}

Proposition~\ref{prop:eh} below provides estimates for the
discretization errors that are robust with respect to $c_j$ and
$\lambda$. In particular, the implicit constants in the estimates are
uniformly bounded for arbitrarily large $\lambda$ and arbitrarily
small $c_j > 0$ for $j = 1, \dots, A$. We also note that the
discretization errors of $u$ in the $L^{\infty}(0, T; V)$-norm and
$p_j$ in the $L^2(0, T; Q_j)$-norms for $j = 1, \dots, A$ converge at
a higher rate than the corresponding interpolation errors, as the
discretization errors are bounded essentially by the initial
discretization error of $u$ in the $V$-norm, by the initial
discretization error of $p_i$ in the $L^2$-norm for $i = 0, \dots, A$
and by the interpolation error of $p_i$ in the $L^2(0, T; L^2)$-norm.

\begin{prop}
  \label{prop:eh}
  Assume that $(u, p) \in C^1(0, T; V) \times C^1(0, T; Q)$ solves the
  total pressure-based variational formulation of the MPET
  equations~\eqref{eq:mpet:vf} for given $f$ and $g_j$ for $j = 1,
  \dots, A$. Assume that $V_h \times Q_h$ satisfies assumptions
  \textbf{A1}-\textbf{A2}, that $(u_h, p_h) \in C^1(0, T; V_h) \times
  C^1(0, T; Q_h)$ solves the corresponding finite element
  semi-discrete problem~\eqref{eq:quasistatic:fem}, and that the
  discretization errors $e_u^{\ea}$ and $e_p^{\ea}$ are defined
  by~\eqref{eq:def:discretization:errors}. Then, the following
  estimate holds for all $t \in (0, T]$:
  \begin{multline}
    \label{eq:prop:eh}
    \| \strain (e_u^{\ea}(t)) \|_{2 \mu} + \sum_{j=1}^A \| e_{p_j}^{\ea}(t) \|_{c_j} + \| \alpha \cdot e_p^{\ea}(t) \|_{\lambda^{-1}} \\
      + \left ( \int_0^t \sum_{j=1}^A \| \Grad e_{p_j}^{\ea} \|_{K_j}^2
      + \sum_{i, j=1}^A \| e_{p_j}^{\ea} - e_{p_i}^{\ea} \|^2_{\xi_{j \leftarrow i}} \ds \right )^{\frac{1}{2}} \\
      \lesssim 
      E_0^h
      + \int_0^t \| \alpha \cdot e_p^I \|_{\lambda^{-1}} \ds +
      \left ( \int_0^t \sum_{j=1}^{A} \| c_j \dot{e}_{p_j}^I + S_j(e_p^I) \|^2 \ds  \right )^{\frac{1}{2}}  ,
  \end{multline}
  with an implicit constant independent of $h$, $T$, $\lambda$, $c_j$
  and $\xi_{j \leftarrow i}$ for $i, j = 1, \dots, A$ where $S_j(e_p)
  = \ssum_{i=1}^A \xi_{j \leftarrow i} (e_{p_j} - e_{p_i})$ and
  \begin{equation}
    \label{eq:def:E0h}
    E_0^h = \| \strain (e_u^{\ea}(0)) \|_{2 \mu} + \ssum_{j=1}^{A} \| e_{p_j}^{\ea}(0) \|_{c_j} + \| \alpha \cdot e_p^{\ea}(0) \|_{\lambda^{-1}}. 
  \end{equation}
  Moreover, for $t \in (0, T]$,
  \begin{equation}
    \label{eq:prop:eh:p0}
    \| e_{p_0}^h(t) \|
    \lesssim \| \strain(e_u^h(t)) \|_{2 \mu} .
  \end{equation}
\end{prop}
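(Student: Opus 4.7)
My plan is to reduce the estimate to Theorem~\ref{thm:mpet:qs:ee} applied to the discretization errors $(e_u^{\ea}, e_p^{\ea})$. Subtracting the semi-discrete system~\eqref{eq:quasistatic:fem} from the continuous variational problem~\eqref{eq:mpet:vf} tested against discrete test functions (valid by conformity $V_h \subset V$, $Q_{i,h} \subset Q_i$) and decomposing $e_u = e_u^I + e_u^{\ea}$, $e_{p_i} = e_{p_i}^I + e_{p_i}^{\ea}$ yields error equations for $(e_u^{\ea}, e_p^{\ea})$. The interpolation operators are specifically designed so that the interpolation-error contributions vanish in three places: in the elasticity equation and the $\Div$ part of the constraint (by~\eqref{eq:def:stokes:interpolant}), and in the diffusive terms of the pressure equations (by~\eqref{eq:def:elliptic:interpolant}).

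The residual error system for $(e_u^{\ea}, e_p^{\ea})$ has exactly the structure of~\eqref{eq:ee:quasistatic} with
\begin{equation*}
  f \equiv 0, \qquad \beta \equiv -e_p^I, \qquad \gamma_j \equiv -\bigl(c_j \dot{e}_{p_j}^I + S_j(e_p^I)\bigr), \quad j = 1, \dots, A.
\end{equation*}
Invoking Theorem~\ref{thm:mpet:qs:ee} directly then delivers the desired left-hand side bounded by $E_0^{\ea}$ plus the $L^2$-in-time contribution of $c_j \dot{e}_{p_j}^I + S_j(e_p^I)$ (from the $\gamma_j$ term) plus a contribution from the $\dot{\beta}$ term. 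The pointwise bound~\eqref{eq:prop:eh:p0} is then obtained by reproducing the inf-sup argument at the end of the proof of Theorem~\ref{thm:mpet:qs:ee}, this time invoking the discrete inf-sup condition from Assumption~\textbf{A1} together with a discrete Korn inequality.

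The main obstacle is that the $\dot{\beta}$ term from this direct reduction produces $\int_0^t \|\alpha \cdot \dot{e}_p^I\|_{\lambda^{-1}} \ds$, one time derivative more than the $\int_0^t \|\alpha \cdot e_p^I\|_{\lambda^{-1}} \ds$ appearing in~\eqref{eq:prop:eh}. To transfer the time derivative off of $e_p^I$, I would integrate by parts in time in the step of the proof of Theorem~\ref{thm:mpet:qs:ee} that handles $\int_0^t \inner{\lambda^{-1} \alpha \cdot \dot{\beta}}{\alpha \cdot p} \ds$. The resulting boundary term at $t$ pairs $\|\alpha \cdot e_p^I(t)\|_{\lambda^{-1}}$ against $\|\alpha \cdot e_p^{\ea}(t)\|_{\lambda^{-1}}$ and can be absorbed into the left-hand side by Young's inequality, while the residual integrated term involving $\dot{e}_p^{\ea}$ must be controlled using the time-differentiated constraint equation from~\eqref{eq:quasistatic:fem} coupled to the elasticity equation. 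Executing this manipulation while keeping all constants independent of $\lambda$, $c_j$, and $\xi_{j \leftarrow i}$ is the delicate bookkeeping step.
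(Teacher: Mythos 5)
Your reduction is exactly the paper's proof: subtract \eqref{eq:quasistatic:fem} from \eqref{eq:mpet:vf} with discrete test functions, split the errors via \eqref{eq:def:discretization:errors}, use \eqref{eq:def:stokes:interpolant} to annihilate the interpolation errors in the elasticity equation and in the divergence part of the constraint and \eqref{eq:def:elliptic:interpolant} to annihilate them in the diffusion terms, and then observe that $(e_u^{\ea}, e_p^{\ea})$ satisfies \eqref{eq:ee:quasistatic} with $f \equiv 0$, $\beta = -e_p^I$ and $\gamma_j = -(c_j \dot{e}_{p_j}^I + S_j(e_p^I))$, so that Theorem~\ref{thm:mpet:qs:ee} applies; the bound \eqref{eq:prop:eh:p0} is likewise obtained in the paper from the discrete inf-sup condition of \textbf{A1} together with Korn's inequality, as you propose.

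Where you diverge is the handling of the $\dot{\beta}$ term, and there your proposal has a genuine gap. First, note that the paper's own proof does nothing beyond the direct application: Theorem~\ref{thm:mpet:qs:ee} delivers $\int_0^t \| \alpha \cdot \dot{\beta} \|_{\lambda^{-1}} \ds = \int_0^t \| \alpha \cdot \dot{e}_p^I \|_{\lambda^{-1}} \ds$, i.e.~the time-differentiated interpolation error, and the proof stops there; the undifferentiated term in \eqref{eq:prop:eh} (and, downstream, the absence of time derivatives in the $L^1$-in-time norms of $u$ and $p_0$ in Theorem~\ref{thm:mpet:qs:taylor-hood}) is not produced by any further manipulation in the paper, so the discrepancy you spotted is a mismatch between statement and proof rather than an obstacle the paper overcomes. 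Second, the repair you sketch does not close as described: after integrating by parts in time, the leftover term $\int_0^t \inner{\lambda^{-1} \alpha \cdot e_p^I}{\alpha \cdot \dot{e}_p^{\ea}} \ds$ contains $\alpha \cdot \dot{e}_p^{\ea}$, which is not controlled by any quantity in the energy on the left of \eqref{eq:thm:ee}; eliminating it via the time-differentiated discrete constraint merely trades it for $\Div \dot{e}_u^{\ea}$, which is equally uncontrolled unless one also tests a time-differentiated elasticity equation with $\dot{e}_u^{\ea}$, i.e.~runs a different and more regularity-demanding energy argument. So either prove and state the estimate with $\dot{e}_p^I$, as the direct application of Theorem~\ref{thm:mpet:qs:ee} gives---this costs only time-derivative regularity of $(u, p)$ in the subsequent Taylor--Hood estimates, not order in $h$---or supply a genuinely new argument for the undifferentiated version; the integration-by-parts step as written should not be presented as a proof.
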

\begin{proof}
  A standard subtraction of~\eqref{eq:quasistatic:fem}
  from~\eqref{eq:mpet:vf} gives that the errors $e_u$ and
  $e_p$ satisfy the error equations:
  \begin{subequations}
    \begin{alignat}{2}
      \inner{2 \mu \strain(e_u)}{\strain(v)} + \inner{e_{p_0}}{\Div v} &= 0 &&\quad \foralls v \in V_h, \\
      \inner{\Div e_u}{q_0} - \inner{\lambda^{-1} \alpha \cdot e_p}{q_0} &= 0 && \quad \foralls q_0 \in Q_{0, h}, \\
      \inner{c_j \dot{e}_{p_j} +  \alpha_j \lambda^{-1} \alpha \cdot \dot{e}_p + S_{j}(e_p)}{q_j} + \inner{K_j \Grad e_{p_j}}{\Grad q_j} &= 0
      &&\quad \foralls q_j \in Q_{j, h},
    \end{alignat}
  \end{subequations}
  for $j = 1, \dots, A$ with $S_j(e_p) = \ssum_{i=1}^A \xi_{j
    \leftarrow i} (e_{p_j} - e_{p_i})$.  By the definition of the
  interpolation operators $\Pi_h$, we obtain the reduced error
  representations:
  \begin{subequations}
    \begin{alignat}{2}
      \label{eq:prop:eh:1}
      \inner{2 \mu \strain(e_u^{\ea})}{\strain(v)} + \inner{e_{p_0}^{\ea}}{\Div v} &= 0 &&\quad \foralls v \in V_h, \\
      \inner{\Div e_u^{\ea}}{q_0} - \inner{\lambda^{-1} \alpha \cdot e_p^{\ea}}{q_0} &=
      \inner{g_0^I}{q_0}
      && \quad \foralls q_0 \in Q_{0, h}, \\
      \inner{c_j \dot{e}^{\ea}_{p_j} + \alpha_j \lambda^{-1} \alpha \cdot \dot{e}^{\ea}_p + S_{j}(e_p^{\ea})}{q_j} + \inner{K_j \Grad e_{p_j}^{\ea}}{\Grad q_j}
      &= \inner{g_j^I}{q_j}
      &&\quad \foralls q_j \in Q_{j, h} ,
    \end{alignat}
  \end{subequations}
  for $j = 1, \dots, A$ where $g_0^I = \lambda^{-1} \alpha \cdot
  e_p^I$ and $g_j^I = - c_j \dot{e}^I_{p_j} - \alpha_j \lambda^{-1}
  \alpha \cdot \dot{e}^I_p - S_{j}(e_p^I)$. Noting that $e_u^{\ea}$
  and $e_p^{\ea}$ satisfy the assumptions of
  Theorem~\ref{thm:mpet:qs:ee} with $f = 0$, $\beta = - e_p^I$ and
  $\gamma_j = - c_j \dot{e}^I_{p_j} - S_{j}(e_p^I)$, the semi-discrete
  discretization error estimate~\eqref{eq:prop:eh} follows.

  Further, by the same techniques as used for the
  bound~\eqref{eq:thm:ee:p0}, and assumption \textbf{A1} combined
  with~\eqref{eq:prop:eh:1}, we observe that
  \begin{equation}
    \| e_{p_0}^h(t) \|
    \lesssim \sup_{v \in V_h, v \not = 0} \frac{| \inner{\Div v}{e_{p_0}^h(t)} |}{\|v\|_{H^1}}
    = \sup_{v \in V_h, v \not = 0} \frac{| \inner{2 \mu \strain(e_u^h(t))}{\strain(v)}|}{\|v\|_{H^1}} 
    \lesssim \| \strain(e_u^h(t)) \|_{2 \mu} ,
  \end{equation}
  with constant depending on $\mu$, thus
  yielding~\eqref{eq:prop:eh:p0}.
\end{proof}

We now consider error estimates associated with the specific choice of
Taylor-Hood type finite element spaces as introduced in
Section~\ref{sec:taylor-hood}. Theorem~\ref{thm:mpet:qs:taylor-hood}
below presents a complete semi-discrete error estimate for this case,
and is easily extendable to other elements satisfying \textbf{A1} and
\textbf{A2}.
\begin{theorem}
  \label{thm:mpet:qs:taylor-hood}
  Assume that $(u, p)$ and $(u_h, p_h)$ are defined as in
  Proposition~\ref{prop:eh} over Taylor-Hood type elements of order
  $l$ and $l_j$ for $j = 1, \dots, A$ as defined
  by~\eqref{eq:def:taylor-hood-type}, and that $(e_u, e_p)$ is
  defined by~\eqref{eq:def:errors}. Assume that $(u, p)$ is
  sufficiently regular. Then the following three estimates hold for
  all $t \in (0, T]$ with implicit constants independent of $h$, $T$,
    $\lambda$, $c_j$ and $\xi_{j \leftarrow i}$ for $i, j = 1, \dots,
    A$. First,
  \begin{multline}
    \label{eq:thm:taylor-hood:u}
    \| u(t) - u_h(t) \|_{H^1}
    \lesssim E_0^h + h^{l+1} \left ( \| u(t) \|_{H^{l+2}} + \| u \|_{L^1(0, t; H^{l+2})} + \| p_0 \|_{L^1(0, t; H^{l+1})} \right ) \\
    + \sum_{j=1}^A h^{l_j+1} \left ( \|p_j \|_{L^1(0, t; H^{l_j+1})} + \| \dot{p_j}, p_j \|_{L^2(0, t; H^{l_j+1})} \right ) ,
  \end{multline}
  holds with $E_0^h$ defined in~\eqref{eq:def:E0h}, and 
  \begin{equation*}
  \| \dot{p_j}, p_j \|_{L^2(0, t; H^{l_j +1})} \equiv \| \dot{p}_j \|_{L^2(0, t; H^{l_j+1})} + \| p_j \|_{L^2(0, t; H^{l_j +1})}.
  \end{equation*}
  In addition, 
  \begin{multline}
    \label{eq:thm:taylor-hood:p}
    \sum_{j=1}^A \| p_j - p_{j, h} \|_{L^2(0, t; H^1)} 
    \lesssim E_0^h  
    + h^{l+1} \left ( \| u \|_{L^1(0, t; H^{l+2})} + \| p_0 \|_{L^1(0, t; H^{l+1})}\right ) \\
    + \sum_{j=1}^A  h^{l_j} \| p_j \|_{L^2(0, t; H^{l_j+1})} 
    + h^{l_j+1} \left ( \| p_j \|_{L^1(0, t; H^{l_j+1})} + \| \dot{p_j}, p_j \|_{L^2(0, t; H^{l_j +1})} \right ) 
  \end{multline}
  and 
  \begin{align}
    \label{eq:thm:taylor-hood:p0}
    \| p_0(t) - p_{0, h}(t) \| \lesssim h^{l+1} ( \| p_0(t) \|_{H^{l+1}} + \| u(t) \|_{H^{l+2}}) + \| \strain(e_u^h(t)) \|_{2 \mu}
  \end{align}
  hold.
\end{theorem}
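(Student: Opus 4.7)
The plan is to use the standard decomposition $e_u = e_u^I + e_u^h$ and $e_{p_j} = e_{p_j}^I + e_{p_j}^h$ from (\ref{eq:def:discretization:errors}), treat the interpolation pieces with the Taylor--Hood approximation estimates (\ref{eq:taylor:hood:ie}), (\ref{eq:p:ie:H1}), (\ref{eq:p:ie}), and control the discretization pieces by feeding these same interpolation bounds into the abstract estimate of Proposition~\ref{prop:eh}. The three conclusions (\ref{eq:thm:taylor-hood:u}), (\ref{eq:thm:taylor-hood:p}), (\ref{eq:thm:taylor-hood:p0}) then follow in parallel by triangle inequality.

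For (\ref{eq:thm:taylor-hood:u}), I would write $\| e_u(t) \|_{H^1} \leq \| e_u^I(t) \|_{H^1} + \| e_u^h(t) \|_{H^1}$; the first summand is bounded directly by (\ref{eq:taylor:hood:ie}) with $m=l+1$, producing $h^{l+1}(\|u(t)\|_{H^{l+2}} + \|p_0(t)\|_{H^{l+1}})$. For the second summand I would apply the discrete Korn inequality so that $\| e_u^h \|_{H^1} \lesssim \| \strain(e_u^h) \|_{2\mu}$, and then invoke (\ref{eq:prop:eh}). This reduces the task to bounding the two RHS integrals of Proposition~\ref{prop:eh} in terms of interpolation errors. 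For $\int_0^t \|\alpha \cdot e_p^I\|_{\lambda^{-1}} \ds$, I would expand $\alpha \cdot e_p^I = e_{p_0}^I + \sum_{j=1}^A \alpha_j e_{p_j}^I$ and apply (\ref{eq:taylor:hood:ie}) to the $e_{p_0}^I$ term and (\ref{eq:p:ie}) to each $e_{p_j}^I$, yielding the $L^1$-in-time contributions $h^{l+1}\|u\|_{L^1(0,t;H^{l+2})}$, $h^{l+1}\|p_0\|_{L^1(0,t;H^{l+1})}$, and $h^{l_j+1}\|p_j\|_{L^1(0,t;H^{l_j+1})}$. For $(\int_0^t \sum_j \|c_j \dot e_{p_j}^I + S_j(e_p^I)\|^2\ds)^{1/2}$, I would exploit that $\Pi_h^{Q_j}$ is linear and time-independent, so $\dot e_{p_j}^I = \dot p_j - \Pi_h^{Q_j}\dot p_j$; applying (\ref{eq:p:ie}) to $\dot p_j$ and to the summands in $S_j(e_p^I)$ gives bounds of the form $h^{l_j+1}\|\dot p_j\|_{H^{l_j+1}}$ and $h^{l_j+1}\|p_j\|_{H^{l_j+1}}$ pointwise in time, which squared and integrated produce the $\|\dot p_j, p_j\|_{L^2(0,t;H^{l_j+1})}$ contribution.

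Estimate (\ref{eq:thm:taylor-hood:p}) follows by the same triangle inequality, with the interpolation piece controlled by (\ref{eq:p:ie:H1}) applied to $p_j$ in the $L^2(0,t;H^1)$ norm (which is the source of the $h^{l_j}\|p_j\|_{L^2(0,t;H^{l_j+1})}$ term of optimal but one-order-lower rate), and the discretization piece extracted from the $\Grad e_{p_j}^h$ contribution on the left-hand side of (\ref{eq:prop:eh}) together with the uniform positivity of $K_j$; the right-hand side of (\ref{eq:prop:eh}) is then expanded exactly as in the $u$ case. Estimate (\ref{eq:thm:taylor-hood:p0}) is the most direct: triangle inequality, (\ref{eq:taylor:hood:ie}) for $\|e_{p_0}^I\|$, and the inf-sup bound (\ref{eq:prop:eh:p0}) for $\|e_{p_0}^h\|$ combine immediately.

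The main obstacle is the bookkeeping: tracking that none of the constants implicitly depend on $c_j$, $\xi_{j\leftarrow i}$, or $\lambda$ in a singular way. In particular one must use that $c_j$ and $\xi_{j\leftarrow i}$ enter only multiplicatively (so that they can be absorbed into the implicit constant under the a priori upper bounds on these data) and that the factor $\lambda^{-1/2}$ appearing through the weighted norm $\|\cdot\|_{\lambda^{-1}}$ only \emph{helps} as $\lambda \to \infty$, so it may be discarded in the statement. Verifying that $\dot e_{p_j}^I$ really interpolates $\dot p_j$ at the same order is the one non-bookkeeping ingredient and rests on the time-independence of $\Pi_h^{Q_j}$.
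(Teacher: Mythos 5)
Your proposal is correct and takes essentially the same route as the paper's own proof: triangle inequality with the interpolation/discretization splitting, the Stokes-projection estimate \eqref{eq:taylor:hood:ie} and elliptic-projection estimates \eqref{eq:p:ie:H1}, \eqref{eq:p:ie} (using time-independence of $\Pi_h^{Q_j}$ for $\dot e_{p_j}^I$) fed into Proposition~\ref{prop:eh}, with Korn's inequality for the displacement, Poincar\'e and the lower bound on $K_j$ for the pressures, and the inf-sup bound \eqref{eq:prop:eh:p0} for $p_0$. The only cosmetic deviation is that your bound on $\|e_u^I(t)\|_{H^1}$ carries an extra $h^{l+1}\|p_0(t)\|_{H^{l+1}}$ term, since \eqref{eq:taylor:hood:ie} controls the combined Stokes-projection error; this is harmless but does not appear in \eqref{eq:thm:taylor-hood:u} as stated.
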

\begin{proof}
  Let $(u, p)$, $(u_h, p_h)$ and $(e_u, e_p)$ be as stated. By the
  triangle inequality, the definition of $e_u^h$, Korn's inequality,
  and \eqref{eq:taylor:hood:ie} for any $t \in (0, T]$, we have that
  \begin{align*}
    \| u(t) - u_h(t) \|_{H^1} &\le \| u(t) - \Pi_h^V u(t) \|_{H^1} + \| \Pi_h^V u(t) -  u_h(t) \|_{H^1} \\
    &\lesssim h^{l+1} \| u(t) \|_{H^{l+2}}  + \| \strain(e_u^h(t)) \|_{2 \mu} ,
  \end{align*}
  with inequality constant depending on $\Omega$ and $\mu$. Further,
  Proposition~\ref{prop:eh} gives for any $t \in (0, T]$ that
  \begin{equation}
     \| \strain(e_u^h(t)) \|_{2 \mu} \lesssim E_0^h
     + \int_0^t \| \alpha \cdot e_p^I \|_{\lambda^{-1}} \ds +
     \left ( \int_0^t \sum_{j=1}^A \| c_j \dot{e}_{p_j}^I + S_j(e_p^I) \|^2 \ds \right )^{\half} ,
  \end{equation}
  where $E_0^h$ is defined
  by~\eqref{eq:def:E0h}. Applying~\eqref{eq:taylor:hood:ie}
  and~\eqref{eq:p:ie}, we note that for any $t \in (0, T]$
  \begin{equation}
    \label{eq:thm:taylor-hood:proof:1}
    \| \alpha \cdot e_p^I (t) \|_{\lambda^{-1}}
    \lesssim h^{l+1} \left (\|u(t)\|_{H^{l+2}} + \| p_0 (t)\|_{H^{l+1}} \right )
    + \sum_{j=1}^A h^{l_j+1}  \| p_j(t) \|_{H^{l_j+1}} .
  \end{equation}
  Similarly, by~\eqref{eq:p:ie} and the definition of $S_j$, we have that 
  \begin{equation}
    \label{eq:thm:taylor-hood:proof:2}
    \sum_{j=1}^A \| c_j \dot{e}_{p_j}^I(t) + S_j(e_p^I(t)) \|
    \lesssim
    \sum_{j=1}^A h^{l_j+1}  \| \dot{p}_j(t) \|_{H^{l_j+1}} + h^{l_j+1} \| p_j(t) \|_{H^{l_j+1}}.
  \end{equation}
  Combining the above estimates and rearranging terms
  yield~\eqref{eq:thm:taylor-hood:u}.

  Turning to the pressures $p_j$, analogously using the triangle
  inequality, \eqref{eq:p:ie:H1}, the Poincar\'e inequality, and the
  assumptions on $K_j$, we have for any $t \in (0, T]$ and any $j = 1,
  \dots, A$ that
  \begin{align*}
    \| p_j - p_{j, h} \|_{L^2(0, t; H^1)} 
    &\leq \| p_j - \Pi_h^{Q_j} p_j \|_{L^2(0, t; H^1)} + \| \Pi_h^{Q_j} p_j - p_{j, h} \|_{L^2(0, t; H^1)} \\
    &\lesssim h^{l_j} \| p_j \|_{L^2(0, t; H^{l_j+1})} +
    \left(  \int_0^t \| \nabla e_{p_j}^h(s) \|_{K_j}^2  \ds \right)^{\half} ,
  \end{align*}
  where the constant in the second inequality depends on $\Omega$ and
  the lower bound on $K_j$.  Using
  Proposition~\ref{prop:eh} together
  with~\eqref{eq:thm:taylor-hood:proof:1}
  and~\eqref{eq:thm:taylor-hood:proof:2}, we thus obtain the estimate
  given by~\eqref{eq:thm:taylor-hood:p}.

  Finally, \eqref{eq:thm:taylor-hood:p0} follows from 
  \begin{align*}
  \| p_0 (t) - p_{0,h}(t) \| \le \| p_0 (t) - \Pi_h^{Q_0} p_0(t) \| + \| \Pi_h^{Q_0} p_0 (t) - p_{0,h}(t) \|, 
  \end{align*}
  \eqref{eq:taylor:hood:ie}, and~\eqref{eq:prop:eh:p0}.
\end{proof}

\begin{rmk}
  \label{rmk:cj0}
  We remark that the estimates of Theorem~\ref{thm:mpet:qs:ee},
  Proposition~\ref{prop:eh}, and Theorem~\ref{thm:mpet:qs:taylor-hood}
  all hold uniformly as $c_j \rightarrow 0$, including in the case
  $c_j = 0$, for any $j = 1, \dots, A$.
\end{rmk}

Theorem~\ref{thm:mpet:qs:taylor-hood} above provides an optimal
estimate for $p_j$ in the $L^{\infty}(0, t; H^1)$-norm for $j = 1,
\dots, A$. Moreover, Proposition~\ref{prop:eh} also yields an optimal
estimate for $p_j$ in the $L^{\infty}(0, t; L^2)$-norm for $j = 1,
\dots, A$, as summarized in Proposition~\ref{prop:L2error:pj} below.
\begin{prop}
  \label{prop:L2error:pj}
  Let $(u, p)$, $(u_h, p_h)$, $(e_u, e_p)$ be as in
  Theorem~\ref{thm:mpet:qs:taylor-hood} and let $c_j > 0$ for $j = 1,
  \dots, A$. Then, the following estimate holds for all $t \in (0, T]$
    with implicit constant independent of $h$, $T$, $\lambda$ and
    $\xi_{j \leftarrow i} \geq 0$ for any $i, j = 1, \dots, A$:
  \begin{multline}
  \label{eq:prop:L2error:pj}
  \sum_{j=1}^A \| e_{p_j} (t) \|
  \lesssim
  E_0^h
  + h^{l+1} \left ( \|u \|_{L^1(0, t; H^{l+2})} + \| p_0 \|_{L^1(0, t; H^{l+1})} \right ) \\
  + \sum_{j=1}^A h^{l_j+1} \left ( \| p_j \|_{H^{l_j+1}} + \| p_j \|_{L^1(0, t; H^{l_j+1})}
  + \| p_j \|_{L^2(0, t; H^{l_j+1})} + \| \dot{p}_j \|_{L^2(0, t; H^{l_j+1})}\right ) 
  \end{multline}
  with $E_0^h$ in~\eqref{eq:def:E0h}.
\end{prop}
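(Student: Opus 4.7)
The plan is to decompose $e_{p_j} = e_{p_j}^I + e_{p_j}^{\ea}$ via the triangle inequality, control the interpolation part directly with the $L^2$ elliptic projection estimate~\eqref{eq:p:ie}, and control the discretization part by extracting an unweighted $L^2$ bound from the $c_j$-weighted energy bound in Proposition~\ref{prop:eh}. The pieces needed to bound the right-hand side of~\eqref{eq:prop:eh} are exactly those already computed in the course of proving Theorem~\ref{thm:mpet:qs:taylor-hood}, so the argument is mostly a matter of reassembly rather than any fundamentally new estimate.

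First, I would write $\| e_{p_j}(t) \| \le \| e_{p_j}^I(t) \| + \| e_{p_j}^{\ea}(t) \|$ and apply the $L^2$ elliptic-projection estimate~\eqref{eq:p:ie} to get $\| e_{p_j}^I(t) \| \lesssim h^{l_j+1} \| p_j(t) \|_{H^{l_j+1}}$, which accounts for the $h^{l_j+1} \| p_j \|_{H^{l_j+1}}$ summand in~\eqref{eq:prop:L2error:pj}.

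Next, because $c_j > 0$ is assumed, we have $\| e_{p_j}^{\ea}(t) \| \le c_j^{-1/2} \| e_{p_j}^{\ea}(t) \|_{c_j}$, so it suffices to bound the latter. Proposition~\ref{prop:eh} controls $\sum_{j=1}^A \| e_{p_j}^{\ea}(t) \|_{c_j}$ by $E_0^h$, plus $\int_0^t \| \alpha \cdot e_p^I \|_{\lambda^{-1}} \ds$, plus the $L^2$-in-time norm of the forcing terms $c_j \dot{e}_{p_j}^I + S_j(e_p^I)$. To finish, I would invoke the pointwise-in-time interpolation bounds~\eqref{eq:thm:taylor-hood:proof:1} and~\eqref{eq:thm:taylor-hood:proof:2} from the proof of Theorem~\ref{thm:mpet:qs:taylor-hood} and integrate over $[0, t]$: the first produces the $L^1(0,t;H^{l+2})$, $L^1(0,t;H^{l+1})$ and $L^1(0,t;H^{l_j+1})$ contributions, while the second --- squared and integrated --- yields the $L^2(0,t;H^{l_j+1})$ contributions for $p_j$ and $\dot p_j$. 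Summing in $j$ absorbs the $c_j^{-1/2}$ factor into the implicit constant.

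The main (and essentially only) subtle point is that the passage from the $c_j$-weighted norm in Proposition~\ref{prop:eh} to the unweighted $L^2$ norm of $e_{p_j}^{\ea}$ uses $c_j > 0$ strictly, which is exactly why this hypothesis is imposed in Proposition~\ref{prop:L2error:pj} (whereas Theorem~\ref{thm:mpet:qs:taylor-hood} needed no such assumption for the $H^1$ estimates). Robustness in $\lambda$ and $\xi_{j\leftarrow i}$, as well as $h$- and $T$-independence, is inherited directly from Proposition~\ref{prop:eh} and the interpolation estimates~\eqref{eq:taylor:hood:ie} and~\eqref{eq:p:ie}, with no additional Gr\"onwall-type step needed beyond what is already embedded in Lemma~\ref{lemma:improved:gronwall}.
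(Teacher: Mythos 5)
Your proposal is correct and follows essentially the same route as the paper: triangle inequality splitting $e_{p_j}$ into $e_{p_j}^I + e_{p_j}^{\ea}$, the elliptic-projection estimate~\eqref{eq:p:ie} for the interpolation part, and Proposition~\ref{prop:eh} (using $c_j>0$ to pass from the $c_j$-weighted to the unweighted $L^2$ norm) combined with~\eqref{eq:thm:taylor-hood:proof:1} and~\eqref{eq:thm:taylor-hood:proof:2} for the discretization part. The paper's proof is just a terser version of exactly this argument, and your remark that the constant may then depend on $c_j$ (which the proposition does not claim independence of) is consistent with the statement.
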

\begin{proof}
  Using the triangle inequality and~\eqref{eq:p:ie}, we find that 
  \begin{equation}
  \sum_{j=1}^A \| e_{p_j} \|
  \leq \sum_{j=1}^A \| e_{p_j}^I \|
  +  \| e_{p_j}^h \|
  \lesssim
  \sum_{j=1}^A h^{l_j+1} \| p_j \|_{H^{l_j+1}} + \| e_{p_j}^h \|.
  \end{equation}
  Further, using Proposition~\ref{prop:eh} and the assumption that
  $c_j > 0$ for all $j$, \eqref{eq:thm:taylor-hood:proof:1}, and
  \eqref{eq:thm:taylor-hood:proof:2}, we
  obtain~\eqref{eq:prop:L2error:pj}.
\end{proof}

\section{Numerical convergence experiments}
\label{sec:numerics:convergence}

In this section, we present a set of numerical examples to illustrate
the theoretical results presented. In particular, we examine the
convergence of the numerical approximations for test cases with smooth
solutions. All numerical simulations in this section and in the
subsequent Section~\ref{sec:numerics:brain} were run using the FEniCS
finite element software~\cite{AlnaesEtAl2015} (version 2018.1+), and
the simulation and post-processing code is openly
available~\cite{PiersantiRognes2018}.

\begin{table} [ht]
  \centering
  \begin{tabular}{lcccc}
    \toprule
    $h$ & $\| u (T) - u_h (T)\|$ & Rate & 
    $\| u (T) - u_h (T)\|_{H^1}$ & Rate \\ 
    \midrule
    $H$    & $3.13 \times 10^{-2}$ &      & $7.28 \times 10^{-1}$  &       \\
    $H/2$  & $3.64 \times 10^{-3}$ & 3.11 & $1.98 \times 10^{-1}$ & 1.88  \\
    $H/4$  & $4.35 \times 10^{-4}$ & 3.06 & $5.06 \times 10^{-2}$ & 1.96  \\
    $H/8$  & $5.36 \times 10^{-5}$ & 3.02 & $1.27 \times 10^{-2}$ & 1.99  \\
    $H/16$ & $6.67 \times 10^{-6}$ & 3.01 & $3.19 \times 10^{-3}$ & 2.00  \\
    \midrule
    Optimal & & 3 & &  2 \\
    \bottomrule
    \toprule
    $h$ & 
    $\| p_1 (T) - p_{1,h}(T) \|$ & Rate & 
    $\| p_1 (T) - p_{1,h}(T) \|_{H^1}$ & Rate \\
    \midrule
    $H$    & $3.69 \times 10^{-2}$ &      & $4.21 \times 10^{-1}$ &  \\
    $H/2$  & $9.57 \times 10^{-3}$ & 1.92 & $2.16 \times 10^{-1}$ & 0.96 \\ 
    $H/4$  & $2.47 \times 10^{-3}$ & 1.98 & $1.09 \times 10^{-1}$ & 0.99 \\ 
    $H/8$  & $6.21 \times 10^{-4}$ & 1.99 & $5.45 \times 10^{-2}$ & 1.00 \\ 
    $H/16$ & $1.55 \times 10^{-4}$ & 2.00 & $2.73 \times 10^{-2}$ & 1.00 \\ 
    \midrule
    Optimal & & 2 & & 1 \\
    \bottomrule
    \toprule
    $h$ & 
    $\| p_0 (T) - p_{0,h}(T) \|$ & Rate & 
    \\ 
    \midrule
    $H$    & $1.42 \times 10^{-1}$ &      \\ 
    $H/2$  & $3.10 \times 10^{-2}$ & 2.19 \\ 
    $H/4$  & $7.56 \times 10^{-3}$ & 2.04 \\ 
    $H/8$  & $1.88 \times 10^{-3}$ & 2.01 \\ 
    $H/16$ & $4.70 \times 10^{-4}$ & 2.00 \\ 
    \midrule
    Optimal & & 2  \\
    \bottomrule
  \end{tabular}
  \vspace{1em}
  \caption{Approximation errors and convergence rates for the total
    pressure-based mixed finite element discretization for the smooth
    manufactured test case for a nearly incompressible material
    introduced in Example~\ref{ex:mpet:standard}. We observe that the
    optimal convergence is restored for the total pressure-based
    scheme. This is in contrast to the sub-optimal rates observed with
    the standard scheme (cf.~Table~\ref{tab:mpet:standard}). The
    coarsest mesh size $H$ corresponds to a uniform mesh constructed
    by dividing the unit square into $4 \times 4$ squares and dividing
    each square by a diagonal.}
  \label{tab:ex:tp:1}
\end{table}

\subsection{Convergence in the nearly incompressible case}
\label{sec:num:1}

We consider the manufactured solution test case introduced in
Example~\ref{ex:mpet:standard}. As before, we consider a series of
uniform meshes of the computational domain. The coarsest mesh size $H$
corresponds to a uniform mesh constructed by dividing the unit square
into $4 \times 4$ squares and dividing each square by a diagonal.

We let $V_h \times Q_h$ be the lowest-order Taylor-Hood-type elements,
as defined by~\eqref{eq:def:taylor-hood-type} with $l = 1$ and $l_j =
1$ for $j = 1, \dots, A$, for the semi-discrete total pressure
variational formulation~\eqref{eq:quasistatic:fem}. For this
experiment, we used a Crank-Nicolson discretization in time with time
step size $\Delta t = 0.125$ and $T = 0.5$.  Since the exact solutions
are linear in time, we expected this choice of temporal discretization
to be exact. Indeed, we tested with multiple time step sizes and found
that the errors did not depend on the time step size.

We computed the approximation error of $u_h(T)$ and $p_h(T)$ in the
$L^2$ and $H^1$-norms. The resulting errors for $u_h$, $p_{0, h}$, and
$p_{1, h}$ are presented in Table~\ref{tab:ex:tp:1}, together with
computed convergence rates. The errors and convergence rates of $p_{2,
  h}$ were comparable and analogous to those of $p_{1, h}$ and, for
this reason, not reported here.

From Theorem~\ref{thm:mpet:qs:taylor-hood} and
Proposition~\ref{prop:L2error:pj}, we expect second order convergence
(with decreasing mesh size $h$) for $u(T)$ in the $H^1$-norm, second
order convergence for $p_0(T)$ in the $L^2$-norm, first order
convergence for $p_j(T)$ in the $H^1$-norm and second order
convergence for $p_j(T)$ in the $L^2$-norm (since $c_j > 0$) for $j =
1, \dots, A$. The numerically computed errors are in agreement with
these theoretical results. In particular, we recover the optimal
convergence rates of $2$ for $u_h$ in the $H^1$-norm, $2$ for $p_j$ in
the $L^2$-norm and $1$ for $p_j$ in the $H^1$-norm.

Additionally, we observe that we recover the optimal convergence rate
of $3$ for $u_h(T)$ in the $L^2$-norm for this test case. Further
investigations indicate that this does not hold for general $\nu$:
with $\nu = 0.4$, the convergence rate for $u_h(T)$ in the $L^2$-norm
is reduced to between $2$ and $3$, cf.~Table \ref{tab:ex:tp:nu0.4}.

\begin{table} [ht]
  \centering
  \begin{tabular}{lcccc}
    \toprule
    $h$ & $\| u (T) - u_h (T)\|$ & Rate & 
    $\| u (T) - u_h (T)\|_{H^1}$ & Rate \\ 
    \midrule
    $H$    & $3.12 \times 10^{-2}$ &      & $7.25 \times 10^{0}$  &       \\
    $H/2$  & $3.86 \times 10^{-3}$ & 3.02 & $1.98 \times 10^{-1}$ & 1.87  \\
    $H/4$  & $5.47 \times 10^{-4}$ & 2.82 & $5.08 \times 10^{-2}$ & 1.96  \\
    $H/8$  & $9.90 \times 10^{-5}$ & 2.47 & $1.28 \times 10^{-2}$ & 1.99  \\
    $H/16$ & $2.19 \times 10^{-5}$ & 2.18 & $3.20 \times 10^{-3}$ & 2.00  \\
    \bottomrule
  \end{tabular}
  \vspace{1em}
  \caption{Displacement approximation errors and convergence rates for
    the total pressure-based mixed finite element discretization for
    the smooth manufactured test case introduced in
    Example~\ref{ex:mpet:standard} but with $\nu = 0.4$. The coarsest
    mesh size $H$ corresponds to a uniform mesh constructed by
    dividing the unit square into $4 \times 4$ squares and dividing
    each square by a diagonal. We note that the third order
    convergence rate for $u_h(T)$ in the $L^2$-norm observed in
    Table~\ref{tab:ex:tp:nu0.4} is reduced to order $2-3$ in this case
    with $\nu = 0.4$.}
  \label{tab:ex:tp:nu0.4}
\end{table}

\subsection{Convergence in the vanishing storage coefficient case}
We also considered the same test case, total-pressure-based
discretization, and set-up as described in Section~\ref{sec:num:1},
but now with $c_j = 0$ for $j = 1, 2$. The corresponding errors are
presented in Table~\ref{tab:ex:tp:2}. We note that we observe the same
optimal convergence rates as before for this case with $c_j = 0$.
\begin{table}
  \centering
  \begin{tabular}{lcccc}
    \toprule
    $h$ & $\| u (T) - u_h (T)\|$ & Rate & 
    $\| u (T) - u_h (T)\|_{H^1}$ & Rate \\ 
    \midrule
    $H$    & $3.13 \times 10^{-2}$ &      & $7.28 \times 10^{-1}$  &       \\
    $H/2$  & $3.64 \times 10^{-3}$ & 3.11 & $1.98 \times 10^{-1}$ & 1.88  \\
    $H/4$  & $4.35 \times 10^{-4}$ & 3.06 & $5.06 \times 10^{-2}$ & 1.96  \\
    $H/8$  & $5.36 \times 10^{-5}$ & 3.02 & $1.27 \times 10^{-2}$ & 1.99  \\
    $H/16$ & $6.67 \times 10^{-6}$ & 3.01 & $3.19 \times 10^{-3}$ & 2.00  \\
    \midrule
    Optimal & & 3 & &  2 \\
    \bottomrule
    \toprule
    $h$ & 
    $\| p_1 (T) - p_{1,h}(T) \|$ & Rate & 
    $\| p_1 (T) - p_{1,h}(T) \|_{H^1}$ & Rate \\
    \midrule
    $H$    & $3.95 \times 10^{-2}$ &      & $4.21 \times 10^{-1}$ &  \\
    $H/2$  & $1.06 \times 10^{-2}$ & 1.90 & $2.16 \times 10^{-1}$ & 0.96 \\ 
    $H/4$  & $2.69 \times 10^{-3}$ & 1.97 & $1.09 \times 10^{-1}$ & 0.99 \\ 
    $H/8$  & $6.75 \times 10^{-4}$ & 1.99 & $5.45 \times 10^{-2}$ & 1.00 \\ 
    $H/16$ & $1.69 \times 10^{-4}$ & 2.00 & $2.73 \times 10^{-2}$ & 1.00 \\ 
    \midrule
    Optimal & & 2 & & 1 \\
    \bottomrule
    \toprule
    $h$ & 
    $\| p_0 (T) - p_{0,h}(T) \|$ & Rate & & \\
    \midrule
    $H$    & $1.46 \times 10^{-1}$ &      & & \\
    $H/2$  & $3.25 \times 10^{-2}$ & 2.17 & & \\
    $H/4$  & $7.97 \times 10^{-3}$ & 2.03 & & \\
    $H/8$  & $1.99 \times 10^{-3}$ & 2.00 & & \\
    $H/16$ & $4.96 \times 10^{-4}$ & 2.00 & & \\
    \midrule
    Optimal & & 2 \\
    \bottomrule
  \end{tabular}
  \vspace{1em}
  \caption{Approximation errors and convergence rates for the total
    pressure-based mixed finite element discretization for the smooth
    manufactured test case introduced in
    Example~\ref{ex:mpet:standard} but with vanishing storage
    coefficients ($c_j = 0$ for $j = 1, 2$). We observe the optimal
    convergence also for this set of parameter values. The coarsest
    mesh size $H$ corresponds to a uniform mesh constructed by
    dividing the unit square into $4 \times 4$ squares and dividing
    each square by a diagonal. }
  \label{tab:ex:tp:2}
\end{table}

\subsection{Convergence of the discretization error}

Proposition~\ref{prop:eh} indicates superconvergence of the
discretization errors $e_u^h$ and $e_{p_j}^h$. In particular, this
result predicts that for the lowest-order Taylor-Hood-type elements,
we expect to observe second order convergence for the discretization
error of $p_j$ in the $L^2(0, T; H^1)$-norm. To examine this
numerically, we consider the same test case, total-pressure-based
discretization, and set-up as described in Section~\ref{sec:num:1},
but now compute the error between the elliptic interpolants and the
finite element approximation. The results are given in
Table~\ref{tab:ex:tp:3} for $p_1$. The numerical results were
entirely analogous for $p_2$ and therefore not shown. We indeed
observe the second order convergence of $e_{p_j}^h(T)$ (for $j = 1,
2$) in the $H^1$-norm as indicated by~Proposition~\ref{prop:eh}.
\begin{table}
  \centering
  \begin{tabular}{lcccc}
    \toprule
    $h$ & 
    $\| \Pi_h^1 p_1 (T) - p_{1,h}(T) \|$ & Rate & 
    $\| \Pi_h^1 p_1 (T) - p_{1,h}(T) \|_{H^1}$ & Rate \\
    \midrule
    $H$    & $2.98 \times 10^{-3}$ &      & $1.46 \times 10^{-2}$ &      \\
    $H/2$  & $9.12 \times 10^{-4}$ & 1.71 & $4.25 \times 10^{-2}$ & 1.78 \\ 
    $H/4$  & $2.40 \times 10^{-4}$ & 1.92 & $1.11 \times 10^{-2}$ & 1.94 \\ 
    $H/8$  & $6.09 \times 10^{-5}$ & 1.98 & $2.79 \times 10^{-2}$ & 1.99 \\ 
    $H/16$ & $1.53 \times 10^{-5}$ & 2.00 & $6.99 \times 10^{-2}$ & 2.00 \\ 
    \midrule
    Theoretical & & 2 & & 2 \\
    \bottomrule
  \end{tabular}
  \vspace{1em}
  \caption{Discretization errors and convergence rates for $p_1$ for
    the total pressure-based mixed finite element discretization for
    the smooth manufactured test case for a nearly incompressible
    material introduced in Example~\ref{ex:mpet:standard}. We indeed
    observe the higher (second) order convergence of $e_{p_1}^h(T)$ in
      the $H^1$-norm as indicated by~Proposition~\ref{prop:eh}. The
      coarsest mesh size $H$ corresponds to a uniform mesh constructed
      by dividing the unit square into $4 \times 4$ squares and
      dividing each square by a diagonal. }
  \label{tab:ex:tp:3}
\end{table}

\section{Simulating fluid flow and displacement in a human brain using a 4-network model}
\label{sec:numerics:brain}

In this section, we consider a variant of the 4-network model
presented in \cite{TullyVentikos2011} defined over a human brain mesh
with physiologically inspired parameters and boundary conditions. In
particular, we consider the MPET equations~\eqref{eq:mpet} with $A =
4$. The original 4 networks of \cite{TullyVentikos2011} represent (1)
interstitial fluid-filled extracellular spaces, (2) arteries, (3)
veins and (4) capillaries. In view of recent
findings~\cite{AbbottEtAl2018} however, we conjecture that it may be
more physiologically interesting to interpret the extracellular
compartment as a paravascular network.

The computational domain is defined by Version 2 of the Colin 27 Adult
Brain Atlas FEM mesh~\cite{Fang2010}, in particular a coarsened
version of this mesh with $99\,605$ cells and $29\,037$ vertices, and
is illustrated in Figure~\ref{fig:brain:mesh} (left). The domain
boundary consists of the outer surface of the brain, referred to below
as the \emph{skull}, and of inner convexities, referred to as the
\emph{ventricles}, cf.~Figure~\ref{fig:brain:mesh} (right). We
selected three points in the domain $x_0 = (89.9, 108.9, 82.3)$
(center), $x_1 = (102.2, 139.3, 82.3)$ (point in the central z-plane),
and $x_2 = (110.7, 108.9, 98.5)$ (point in the central y-plane). The
relative locations of these points within the domain are also
illustrated in Figure~\ref{fig:brain:mesh} (left).
 \begin{center}
   \begin{figure}
     \includegraphics[width=0.48\textwidth]{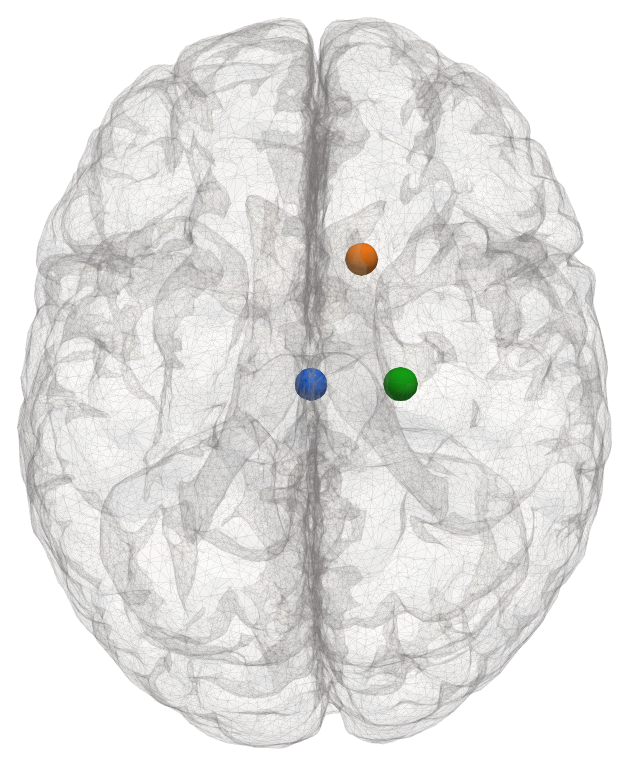}
     \includegraphics[width=0.48\textwidth]{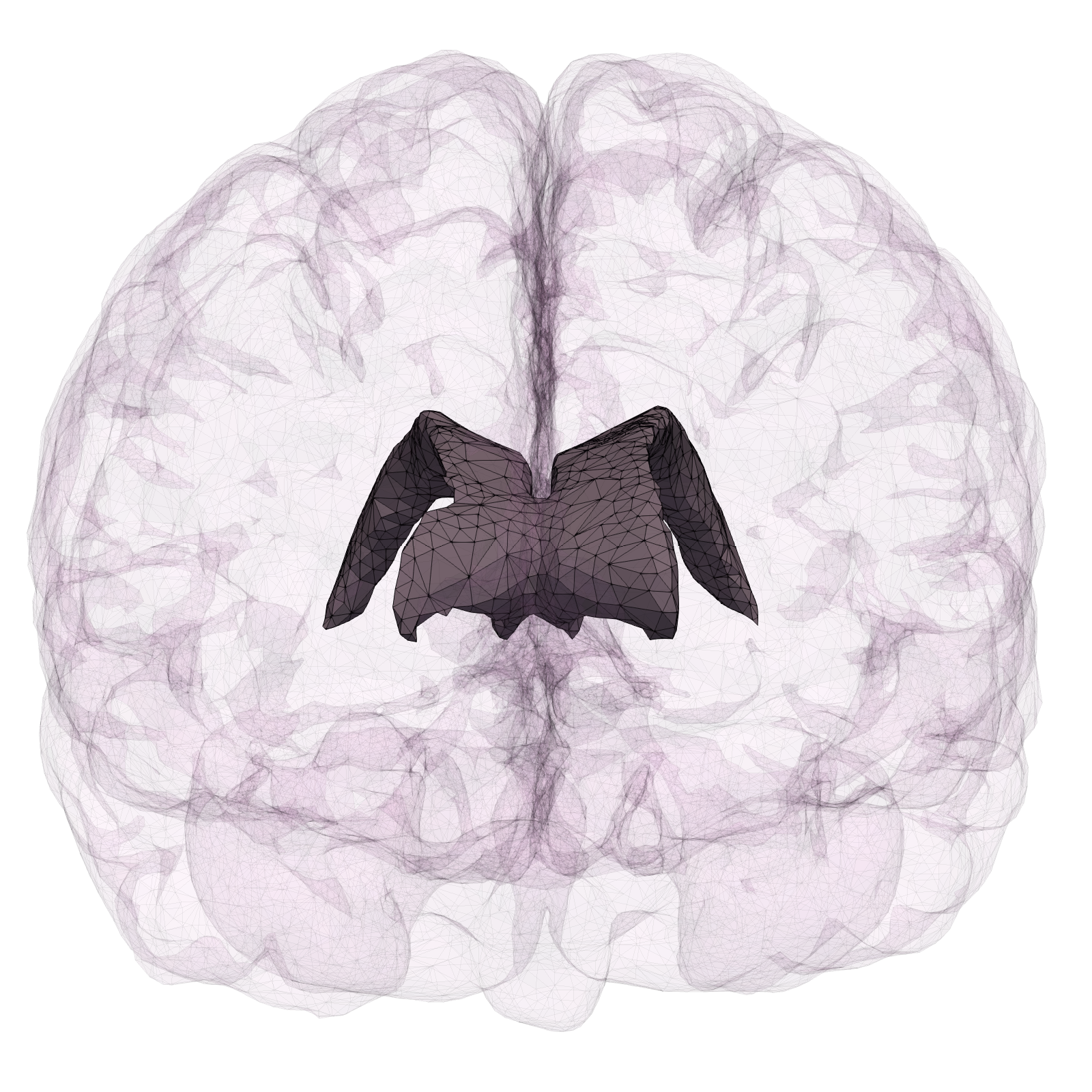}
     \caption{Left: The human brain computational mesh used in
       Section~\ref{sec:numerics:brain} with $99\,605$ cells and
       $29\,037$ vertices. View from top i.e.~along the negative
       z-axis. The points $x_0$ (blue), $x_1$ (orange), $x_2$ (green)
       are marked with spheres.  Right: The inner (ventricular)
       boundaries of the computational mesh. View from front
       i.e.~along negative y-axis.}
     \label{fig:brain:mesh}
   \end{figure}
 \end{center}

 \begin{table} [h]
\begin{center}
  \begin{tabular}{cccc}
    \toprule
    Symbol & Value(s) & Units & Reference \\
    \midrule
    $\nu$ & $0.4999$ & & Comparable with~\cite{NagashimaEtAl1987} \\
    $E$ & $1500$ & Pa & Comparable with~\cite{BuddayEtAl2015} \\
    $c_1$ & $3.9 \times 10^{-4}$ & Pa$^{-1}$ & \cite[Table 2]{GuoEtAl2018} \\
    $c_2, c_4$ & $2.9 \times 10^{-4}$ & Pa$^{-1}$ & \cite[Table 2]{GuoEtAl2018} \\
    $c_3$ & $1.5 \times 10^{-5}$ & Pa$^{-1}$ & \cite[Table 2]{GuoEtAl2018} \\
    $\alpha_1$ & 0.49 & & \cite[Table 2]{GuoEtAl2018} \\
    $\alpha_2, \alpha_4$ & 0.25 & & \cite[Table 2]{GuoEtAl2018} \\
    $\alpha_3$ & 0.01 & & \cite[Table 2]{GuoEtAl2018} \\
    $K_1$ & $1.57\cdot 10^{-5}$ & mm$^{2}$ Pa$^{-1}$ s$^{-1}$ & \cite[Table 1]{VardakisEtAl2016} \\
    $K_2, K_3, K_4, $ & $3.75\cdot 10^{-2}$ & mm$^{2}$ Pa$^{-1}$ s$^{-1}$ & \cite[Table 1]{VardakisEtAl2016} \\
    $\xi_{2 \leftarrow 4}, \xi_{4 \leftarrow 3}, \xi_{4 \leftarrow 1}, \xi_{1 \leftarrow 3}$ & $1.0 \times 10^{-6}$ & Pa$^{-1}$ s$^{-1}$ &
    Comparable with~\cite{MichlerEtAl2013}\\
    $\xi_{1 \leftarrow 2}, \xi_{2 \leftarrow 3}$ & $0.0$ & Pa$^{-1}$ s$^{-1}$ &
    \cite{VardakisEtAl2016}\\
    \bottomrule
  \end{tabular}
  \vspace{1em}
  \caption{Material parameters used for the multiple network
    poroelasticity equations~\eqref{eq:mpet} with $A = 4$ networks for
    the numerical experiments in Section~\ref{sec:numerics:brain}. We
    remark that a wide range of parameter values can be found in the
    literature and the ones used here represents one sample set of
    representative values.}
  \label{tab:brain-params}
\end{center}
\end{table}
We consider the following set of boundary conditions for the system
for all $t \in (0, T)$. All boundary pressure values are given in mmHg
below, noting that $1$ mmHg $\approx 133.32$ Pa. We assume that the
displacement is fixed on the outer boundary and prescribe a total
stress on the inner boundary:
\begin{equation*}
    u = 0 \quad \text{on skull}, \quad 
    (C \varepsilon(u) - \ssum_{j=1}^4 \alpha_j p_j I ) \cdot n = s \, n \quad \text{on ventricles},
\end{equation*}
where $n$ is the outward boundary normal and $s$ is defined as
\begin{equation*}
  s = - \sum_{j=1}^4 \alpha_j \tilde p_j,
\end{equation*}
where $\tilde p_j$ for $j = 1, \dots, 4$ are given below. We assume
that the fluid in network 1 is in direct communication with the
surrounding cerebrospinal fluid, and that a cerebrospinal fluid
pressure is prescribed. In particular, we assume that the
cerebrospinal fluid pressure pulsates around a baseline pressure of
$5$ (mmHg) with a peak transmantle pressure difference magnitude of
$\delta = 0.012$ (mmHg):
\begin{equation*}
p_1 = 5 + 2 \sin(2 \pi t) \quad \text{on skull}, \quad
p_1 = 5 + (2 + \delta) \sin(2 \pi t) \equiv \tilde{p}_1 \quad \text{on ventricles}. 
\end{equation*}
We assume that a pulsating arterial blood pressure is prescribed at
the outer boundary, while on the inner boundaries, we assume no
arterial flux:
\begin{equation*}
  p_2 = 70 + 10 \sin(2 \pi t) \equiv \tilde{p}_2 \quad \text{on skull}, \quad
  \Grad p_2 \cdot n = 0 \quad \text{on ventricles}.
\end{equation*}
For the venous compartment, we assume that a constant pressure is
prescribed at both boundaries:
\begin{equation*}
  p_3 = 6 \equiv \tilde{p}_3 \quad \text{on skull and ventricles}.
\end{equation*}
Finally, for the capillary compartment, we assume no flux at both
boundaries:
\begin{equation*}
  \Grad p_4 \cdot n = 0 \quad \text{on skull and ventricles}.
\end{equation*}

We consider the following initial conditions:
\begin{equation*}
  u = 0, \quad p_1 = 5, \quad p_2 = 70,
  \quad p_3 = 6, \quad p_4 = (p_2 + p_3)/2 \equiv \tilde{p}_4 ,
\end{equation*}
and material parameters as reported in Table~\ref{tab:brain-params}.

 \begin{center}
  \begin{figure}
   \centering
   \begin{subfigure}[b]{0.49\textwidth}
     \centering
     \includegraphics[width=\textwidth]{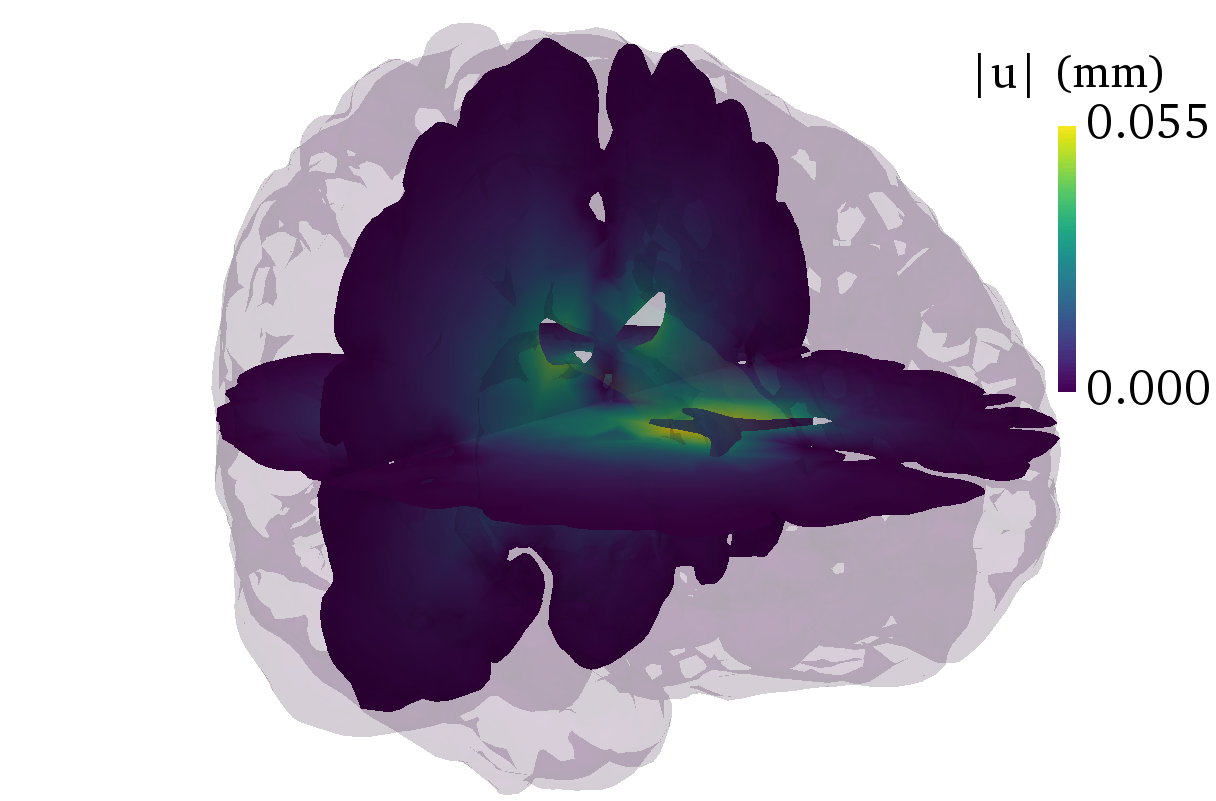} 
     \caption{Displacement magnitude $|u(\bar{t})|$}
   \end{subfigure}
   \begin{subfigure}[b]{0.49\textwidth}
     \centering
     \includegraphics[width=\textwidth]{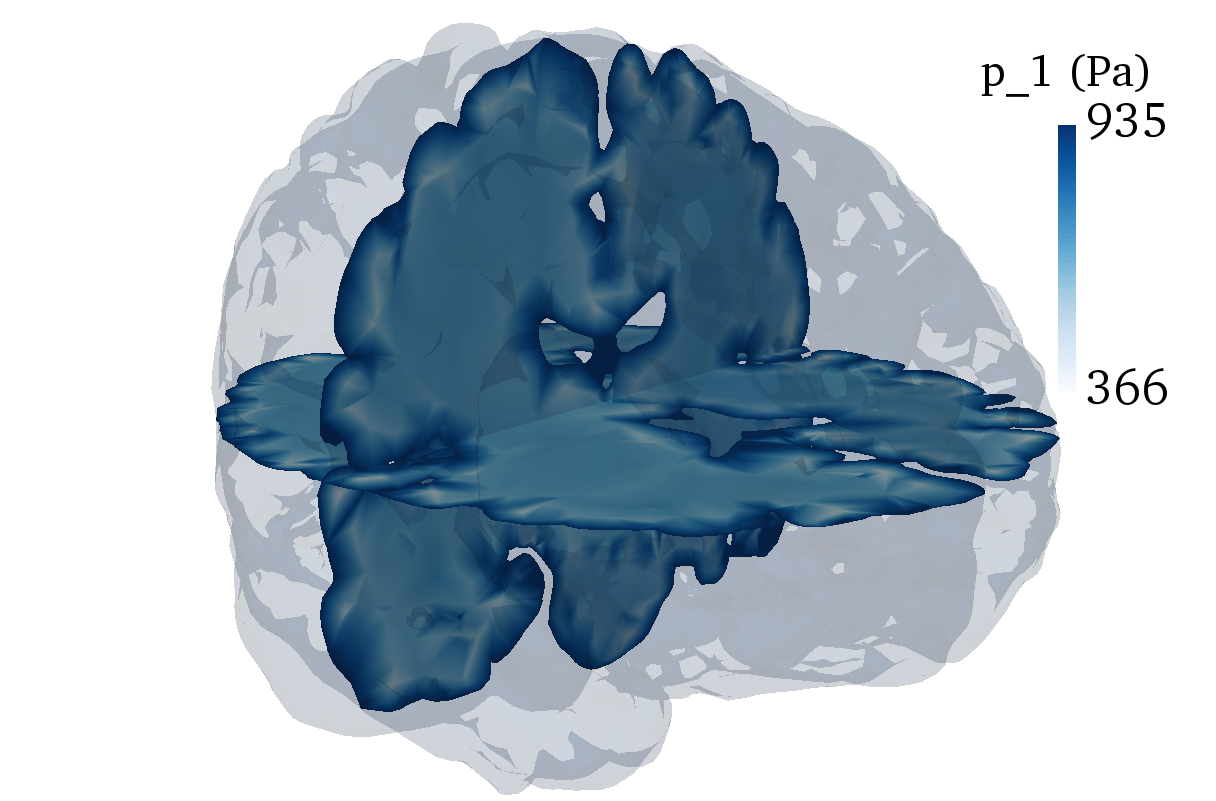} 
     \caption{Extracellular pressure $p_1(\bar{t})$}
   \end{subfigure}
   \vspace{-1em}
   \begin{subfigure}[b]{0.49\textwidth}
     \centering
     \includegraphics[width=\textwidth]{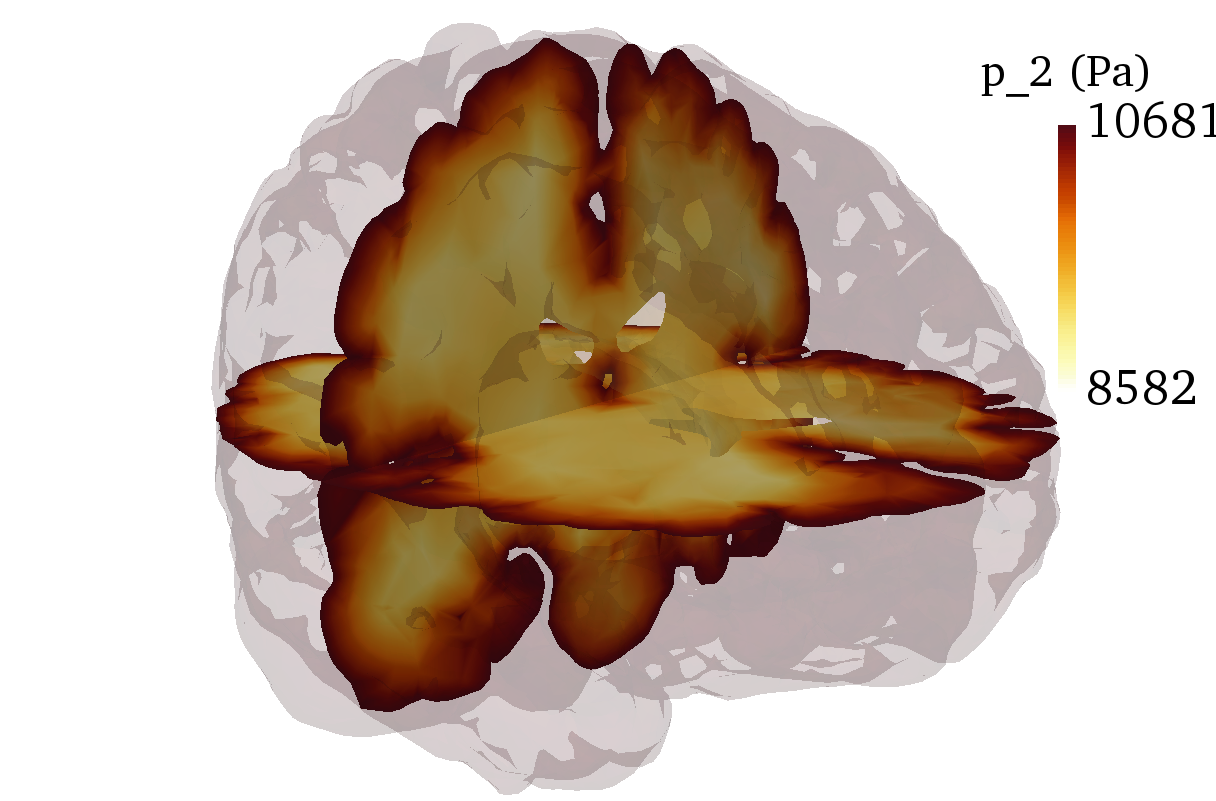} 
     \caption{Arterial pressure $p_2(\bar{t})$}
   \end{subfigure}
   \begin{subfigure}[b]{0.49\textwidth}
     \centering
     \includegraphics[width=\textwidth]{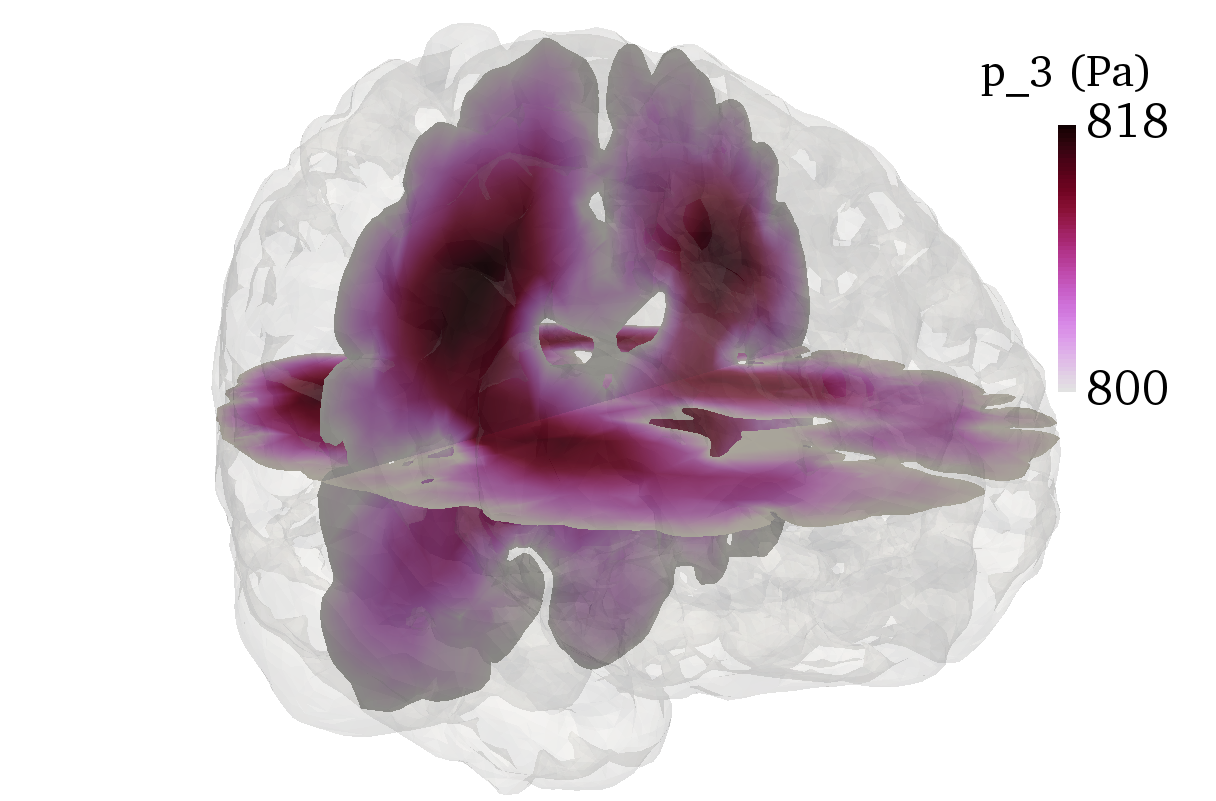} 
     \caption{Venous pressure $p_3(\bar{t})$}
   \end{subfigure}
   \vspace{-1em}
   \begin{subfigure}[b]{0.49\textwidth}
     \centering
     \includegraphics[width=\textwidth]{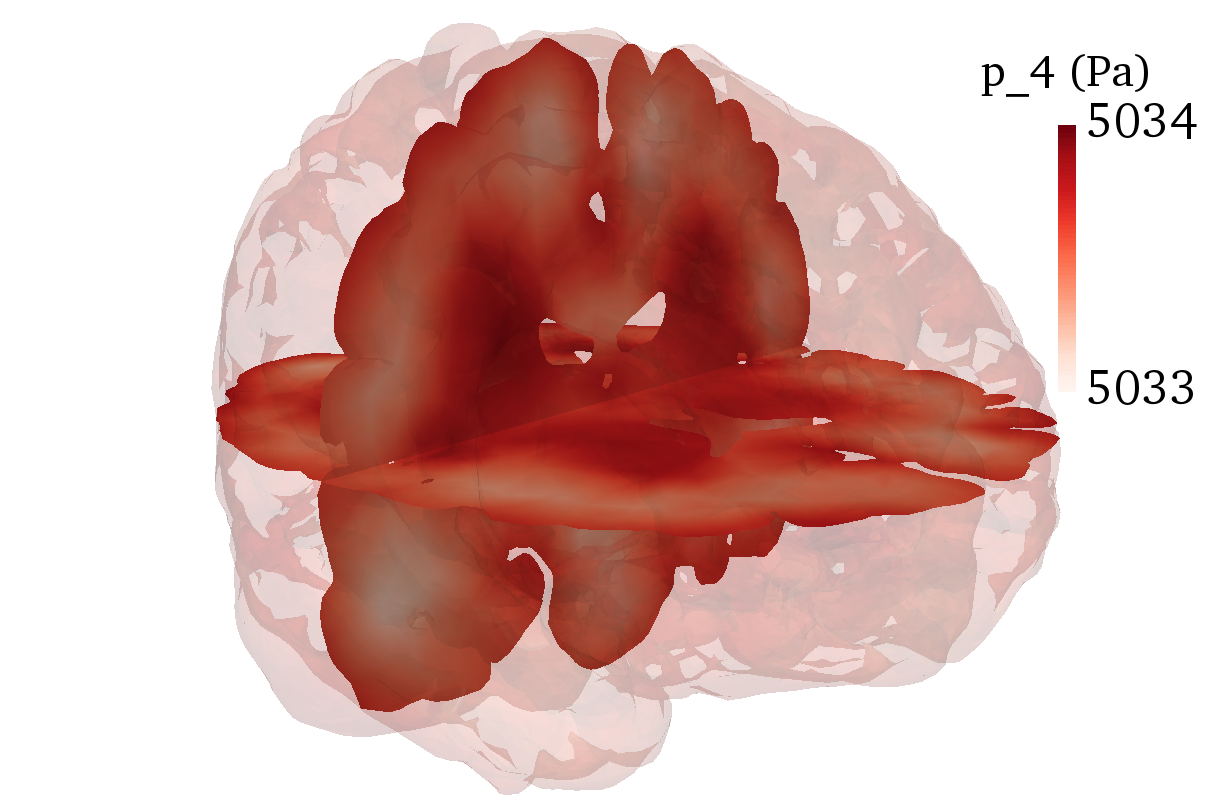} 
     \caption{Capillary pressure $p_4(\bar{t})$}
   \end{subfigure}
   \caption{Results of numerical experiment described in
     Section~\ref{sec:numerics:brain} using the total pressure
     formulation. Plots show slices of computed quantities at $\bar{t}
     = 2.25$ (s) corresponding to the peak arterial inflow in the 2nd
     cycle. From left to right and top to bottom: (a) displacement
     magnitude $|u|$, (b) extracellular pressure $p_1$, (c) arterial
     blood pressure $p_2$, (d) venous blood pressure $p_3$ and (e)
     capillary blood pressure $p_4$.}
  \label{fig:snaps}
  \end{figure}
 \end{center}
 \begin{center}
 \begin{figure}
   \centering
   \begin{subfigure}[b]{0.49\textwidth}
     \centering
     \includegraphics[width=\textwidth]{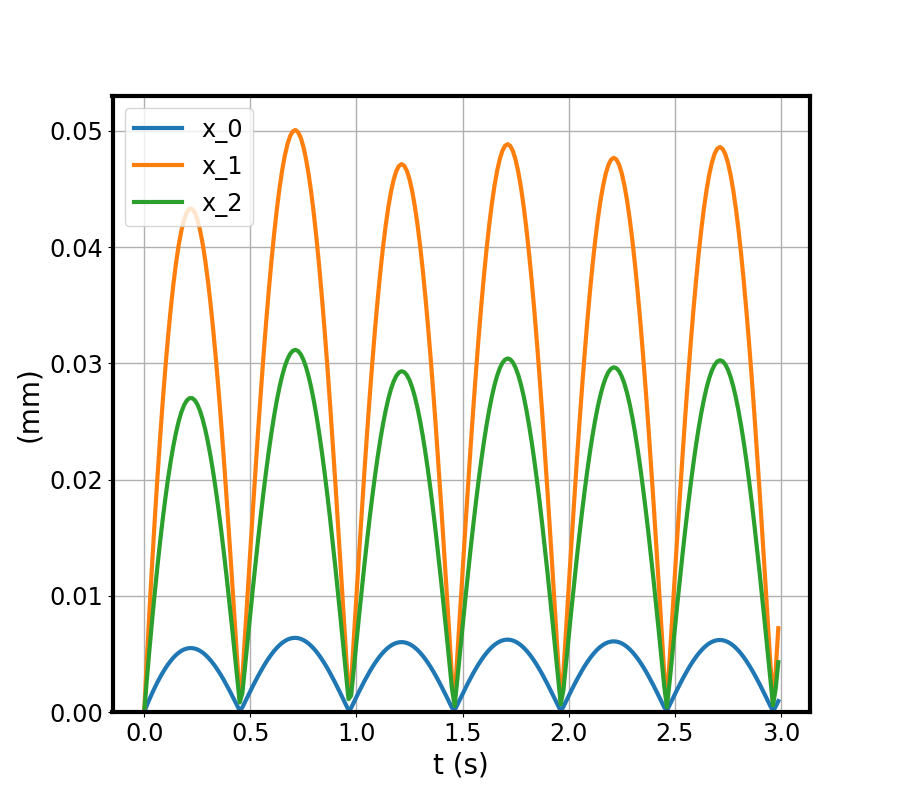} 
     \caption{Displacement magnitude $|u(x_i)|$}
   \end{subfigure}
   \begin{subfigure}[b]{0.49\textwidth}
     \centering
     \includegraphics[width=\textwidth]{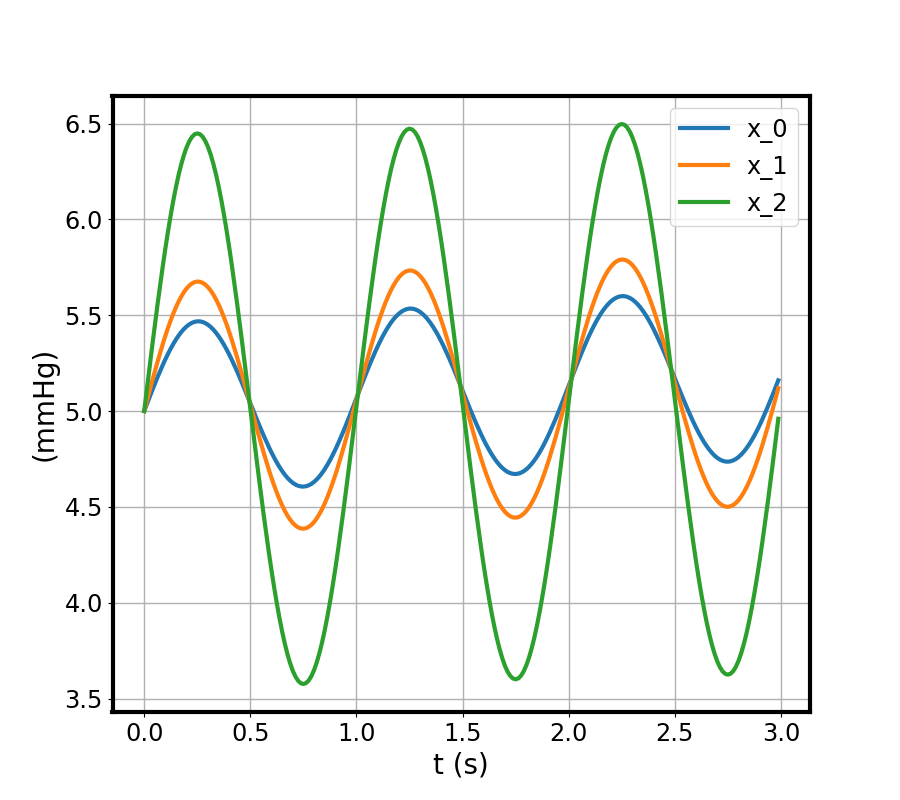} 
     \caption{Extracellular pressure $p_1(x_i)$}
   \end{subfigure}
   \vspace{-1em}
   \begin{subfigure}[b]{0.49\textwidth}
     \centering
     \includegraphics[width=\textwidth]{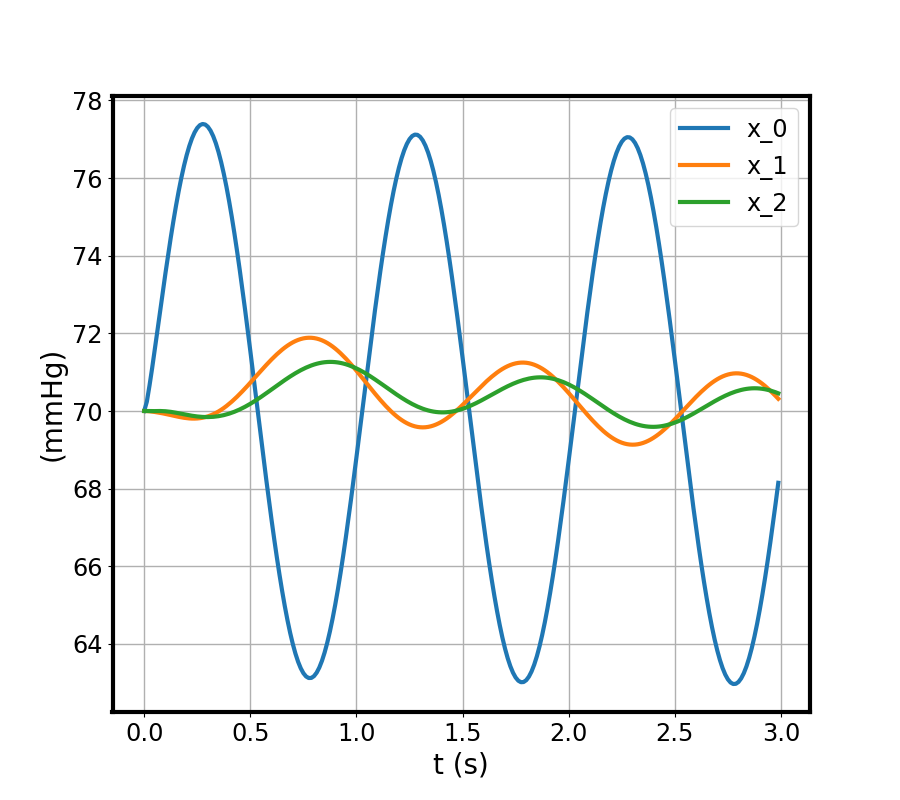} 
     \caption{Arterial pressure $p_2(x_i)$}
   \end{subfigure}
   \begin{subfigure}[b]{0.49\textwidth}
     \centering
     \includegraphics[width=\textwidth]{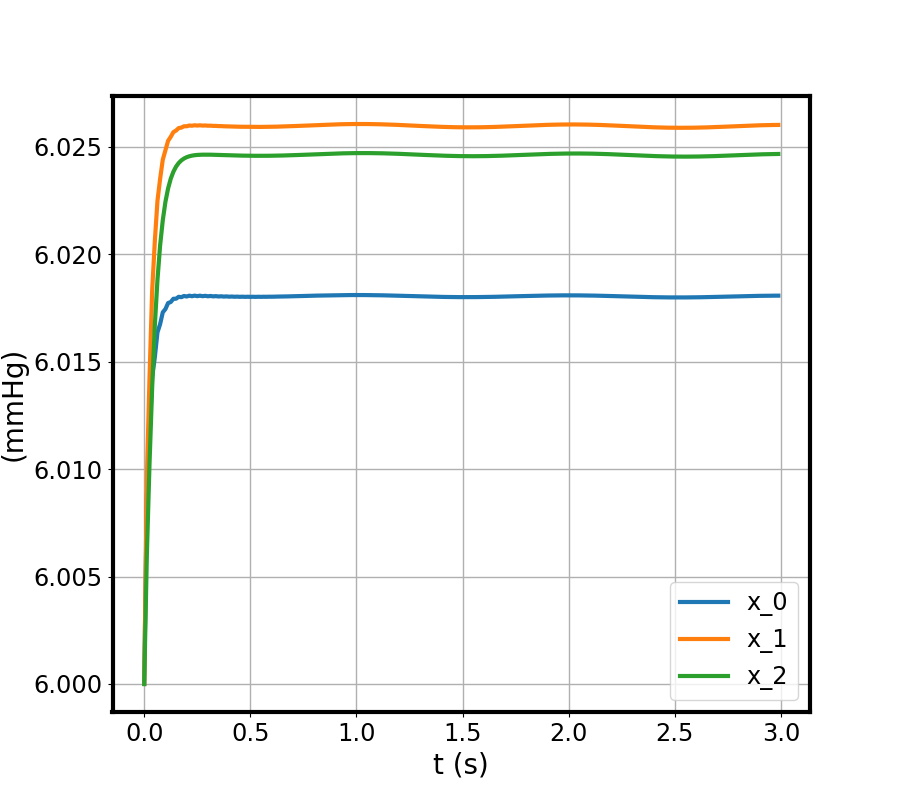} 
     \caption{Venous pressure $p_3(x_i)$}
   \end{subfigure}
   \vspace{-1em}
   \begin{subfigure}[b]{0.49\textwidth}
     \centering
     \includegraphics[width=\textwidth]{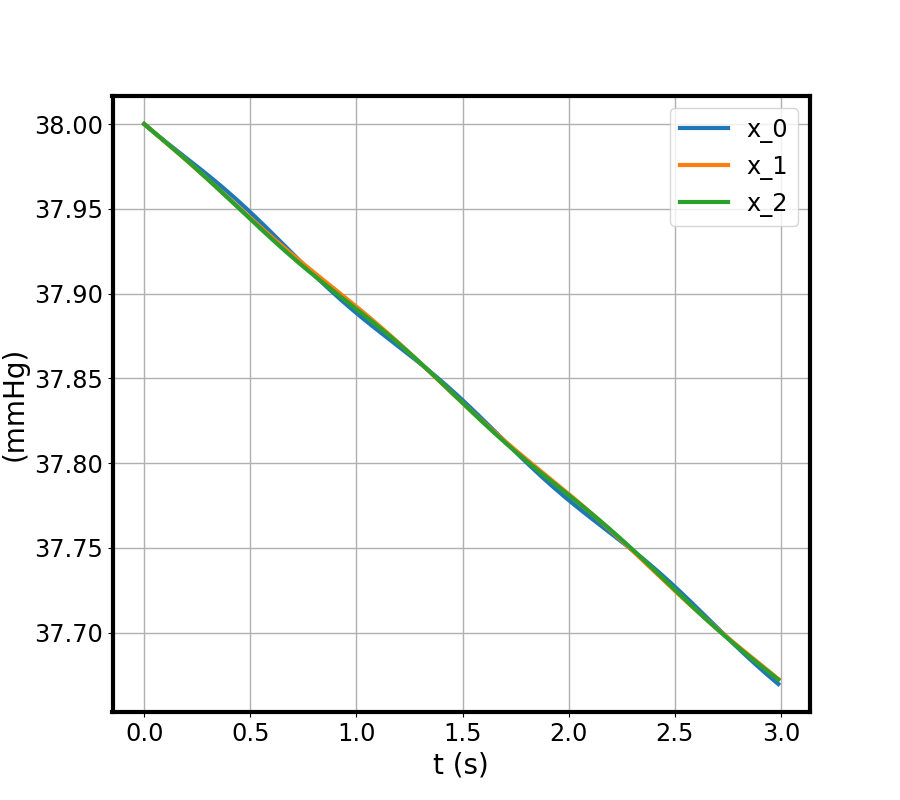} 
     \caption{Capillary pressure $p_4(x_i)$}
   \end{subfigure}
   \caption{Results of numerical experiment described in
     Section~\ref{sec:numerics:brain} using the total pressure
     formulation. Plots show computed quantities over time $t \in (0.0,
     3.0)$ for a set of three points $x_0$, $x_1$, $x_2$. See
     Figure~\ref{fig:brain:mesh} for the location and precise
     coordinates of the points $x_i$. From left to right and top to
     bottom: (a) displacement magnitude $|u|$, (b) extracellular
     pressure $p_1$, (c) arterial blood pressure $p_2$, (d) venous
     blood pressure $p_3$ and (e) capillary blood pressure $p_4$.}
   \label{fig:overtime}
 \end{figure}
 \end{center}

We computed the resulting solutions using the total pressure mixed
finite element formulation with the lowest order Taylor-Hood type
elements ($l = 1$ and $l_j = 1$ for $j = 1, \dots, 4$
in~\eqref{eq:def:taylor-hood-type}), a Crank-Nicolson type
discretization in time with time step $\Delta t = 0.0125$ (s) over the
time interval $(0.0, 3.0)$ (s). The linear systems of equations were
solved using a direct solver (MUMPS). For comparison, we also computed
solutions with a standard mixed finite element formulation (as
described and used in Example~\ref{ex:mpet:standard}) and otherwise
the same numerical set-up.

The numerical results using the total pressure formulation are
presented in Figures~\ref{fig:snaps} and~\ref{fig:overtime}. In
particular, snapshots of the displacement and network pressures at
peak arterial inflow in the 3rd cycle ($t = 2.25$ (s)) are presented
in Figure~\ref{fig:snaps}. Plots of the displacement magnitude and
network pressures in a set of points versus time are presented in
Figure~\ref{fig:overtime}. 

We also compared the solutions computed using the total pressure and
standard mixed finite element formulation. Plots of the displacement
magnitude in a set of points over time are presented in
Figure~\ref{fig:brain:comp}. We clearly observe that the computed
displacements using the two formulations differ. For instance, the
displacement magnitude in the point $x_0$ computed using the standard
formulation is less than half the magnitude computed using the total
pressure formulation. We also visually compared the pressures computed
using the two formulations and found only minimal differences for this
test case (data not shown for the standard formulation).
 \begin{center}
   \begin{figure}
   \begin{subfigure}[b]{0.49\textwidth}
     \centering
     \includegraphics[width=\textwidth]{u_mag.png} 
     \caption{Total pressure formulation}
   \end{subfigure}
   \begin{subfigure}[b]{0.49\textwidth}
     \centering
     \includegraphics[width=\textwidth]{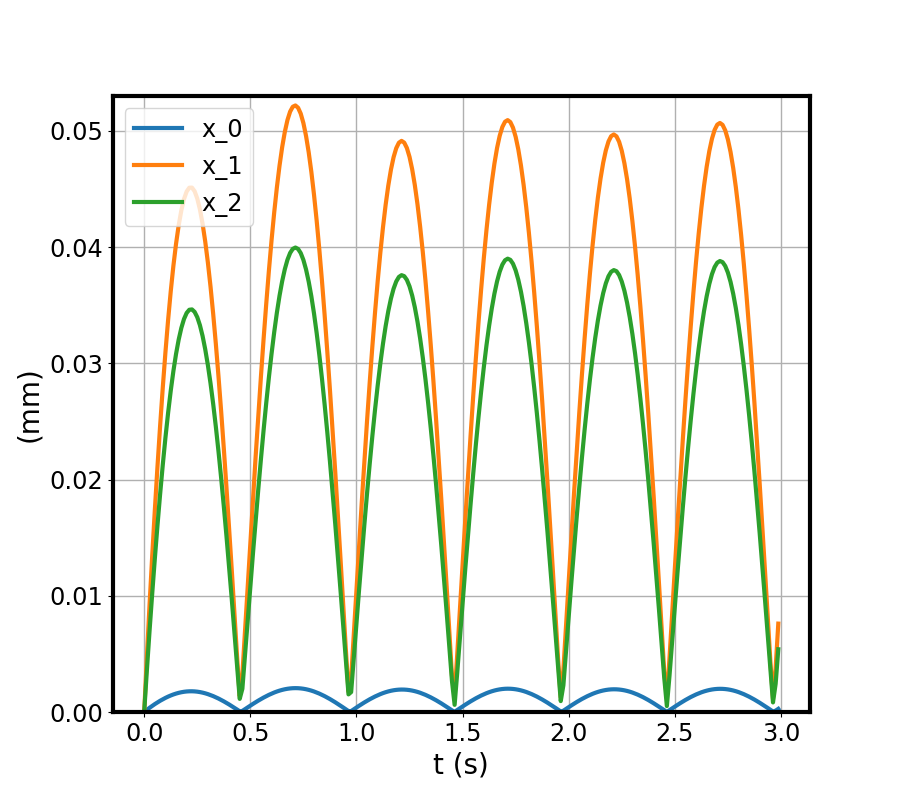} 
     \caption{'Standard' formulation}
   \end{subfigure}
     \caption{Comparison of displacements computed using the standard
       and total pressure formulation
       (cf.~Section~\ref{sec:numerics:brain}). Plots of displacement
       magnitude $|u(x_i, t)|$ versus time $t$, for a set of points
       $x_0, x_1, x_2$ (see Figure~\ref{fig:brain:mesh} for the
       location and precise coordinates of the points $x_i$): (a)
       Total-pressure mixed finite element formulation, (b) Standard
       mixed finite element formulation
       (cf.~Example~\ref{ex:mpet:standard}). The computed displacements
       clearly differ between the two solution methods.}
     \label{fig:brain:comp}
   \end{figure}
 \end{center}
  
\section{Conclusions}
\label{sec:conclusion}

In this paper, we have presented a new mixed finite element
formulation for the quasi-static multiple-network poroelasticity
equations. Our formulation introduces a single additional scalar field
unknown, the total pressure. We prove, via energy and semi-discrete
\emph{a priori} error estimates, that this formulation is robust in
the limits of incompressibility ($\lambda \rightarrow \infty$) and
vanishing storage coefficients ($c_j \rightarrow 0$), in contrast to
standard formulations. Finally, numerical experiments support the
theoretical results. For the numerical experiments presented here, we
have used direct linear solvers. In future work, we will address
iterative solvers and preconditioning of the MPET equations.

\FloatBarrier

\bibliographystyle{siamplain}
\bibliography{references}

\end{document}